\documentclass{amsart}

\usepackage{amssymb,amsmath,amsthm,latexsym,booktabs,qtree}

\usepackage{graphicx}
\usepackage{epstopdf}
\usepackage{hyperref}

\usepackage{tikz}
\usetikzlibrary{arrows}
\usetikzlibrary{calc}
\usetikzlibrary{positioning}

\theoremstyle{definition}
\newtheorem{definition}{Definition}[section]
\newtheorem{example}[definition]{Example}
\newtheorem{remark}[definition]{Remark}

\theoremstyle{plain}
\newtheorem{lemma}[definition]{Lemma}
\newtheorem{proposition}[definition]{Proposition}
\newtheorem{theorem}[definition]{Theorem}

\newcommand{\circh}{\circ} % Change operation symbols as desired
\newcommand{\circv}{\bullet}

\newcommand{\Size}{6 mm} % Adjust size of square box as desired

\tikzset{Square/.style={
  inner sep=0pt,
  minimum width=\Size,
  minimum height=\Size,
  draw=black,
  fill=none,
  align=center
  }}

\allowdisplaybreaks

%%%%%%%%%%%%%%%%%%%%%%%%%%%%%%%%%%%%%%%%%%%%%%%%%%%%%%%%%%%%%%%%%%%%%%%%%%%%%%%%%%%%%%%%%%%%%%%%

\begin{document}

\title{Permutation of elements in double semigroups}

\author{Murray Bremner}

\address{Department of Mathematics and Statistics, University of Saskatchewan, Canada}

\email{bremner@math.usask.ca}

\author{Sara Madariaga}

\address{Department of Mathematics and Statistics, University of Saskatchewan, Canada}

\email{madariaga@math.usask.ca}

\subjclass[2010]{Primary 20M50. Secondary 18D05, 20L05, 20M05.}

% 20M50: connections of semigroups with homological algebra and category theory
% 18D05: double categories, 2-categories, bicategories and generalizations
% 20L05: groupoids (i.e. small categories in which all morphisms are isomorphisms)
% 20M05: free semigroups, generators and relations, word problems

\keywords{Double semigroups, interchange relation, commutativity, directed graphs, connected components, 
cycle bases, higher-dimensional algebra, algebraic operads}

\begin{abstract}
Double semigroups have two associative operations $\circh, \circv$ related by the interchange relation:
$( a \circv b ) \circh ( c \circv d ) \equiv ( a \circh c ) \circv ( b \circh d )$.
Kock \cite{Kock2007} (2007) discovered a commutativity property in degree 16 for double semigroups:
associativity and the interchange relation combine to produce permutations of elements.
We show that such properties can be expressed in terms of cycles in directed graphs with edges labelled
by permutations.
We use computer algebra to show that 9 is the lowest degree for which commutativity occurs,
and we give self-contained proofs of the commutativity properties in degree 9.
\end{abstract}

\maketitle

\thispagestyle{empty}

%%%%%%%%%%%%%%%%%%%%%%%%%%%%%%%%%%%%%%%%%%%%%%%%%%%%%%%%%%%%%%%%%%%%%%%%%%%%%%%%%%%%%%%%%%%%%%%%

\section{Introduction}

\begin{definition}
A \textbf{double semigroup} is a set $S$ with two associative binary operations $\circv, \circh$ satisfying
the \textbf{interchange relation} for all $a, b, c, d \in S$:
  \begin{equation}
  \label{interchange}
  \tag{$\boxplus$}
  ( a \circv b ) \circh ( c \circv d ) \equiv ( a \circh c ) \circv ( b \circh d ).
  \end{equation}
The symbol $\equiv$ indicates that the equation holds for all values of the variables.
\end{definition}

We interpret $\circh$ and $\circv$ as horizontal and vertical compositions,
so that \eqref{interchange} expresses the equivalence of two decompositions of a square array:
  \[
  ( a \circh b ) \circv ( c \circh d )
  \equiv 
  \begin{array}{c}
  \begin{tikzpicture}[draw=black, x=\Size, y=\Size]
    \node [Square] at ($(0,0)$) {$a$};
    \node [Square] at ($(1,0)$) {$b$};
    \node [Square] at ($(0,-1.2)$) {$c$};
    \node [Square] at ($(1,-1.2)$) {$d$};
  \end{tikzpicture}
  \end{array}
   \equiv 
  \begin{array}{c}
  \begin{tikzpicture}[draw=black, x=\Size, y=\Size]
    \node [Square] at ($(0,0)$) {$a$};
    \node [Square] at ($(1,0)$) {$b$};
    \node [Square] at ($(0,-1)$) {$c$};
    \node [Square] at ($(1,-1)$) {$d$};
  \end{tikzpicture}
  \end{array}
   \equiv 
  \begin{array}{c}
  \begin{tikzpicture}[draw=black, x=\Size, y=\Size]
    \node [Square] at ($(0,0)$) {$a$};
    \node [Square] at ($(1.2,0)$) {$b$};
    \node [Square] at ($(0,-1)$) {$c$};
    \node [Square] at ($(1.2,-1)$) {$d$};
  \end{tikzpicture}
  \end{array}
   \equiv
  ( a \circv c ) \circh ( b \circv d ).
  \]
This interpretation of the operations extends to any double semigroup monomial,
producing what we call the geometric realization of the monomial.

The interchange relation originated in homotopy theory and higher categories; 
see Mac Lane \cite[(2.3)]{MacLane1963} and \cite[\S XII.3]{MacLane1998}.
It is also called the Godement relation by some authors; see Simpson \cite[\S2.1]{Simpson2012}.
If both operations are unital, with the same unit 1, then the Eckmann-Hilton argument \cite{EckmannHilton1961} 
shows that the operations are equal and commutative, even without the assumption of associativity.
If one allows the operations to have different units, then a similar argument shows that
$1_\circh = 1_\circv$; see Brown \cite[\S 4]{Brown1982}.
We therefore assume that double semigroups are non-unital.

\subsection*{Commutativity properties}

This paper is motivated by a commutativity property for double semigroups discovered in 2007 by
Kock \cite[Proposition 2.3]{Kock2007}:
in degree 16, associativity and the interchange relation combine to produce permutations of variables
in two monomials with the same placement of parentheses and choice of operations.
In algebraic notation, Kock proved the following identity; note the transposition of $f$ and $g$:
  \begin{equation}
  \label{kockidentity}
  \tag{K}
  \begin{array}{l}
  ( a \circh b \circh c \circh d )
  \circv
  ( e \circh f \circh g \circh h )
  \circv
  ( i \circh j \circh k \circh \ell )
  \circv
  ( m \circh n \circh p \circh q )
  \equiv
  \\
  ( a \circh b \circh c \circh d )
  \circv
  ( e \circh g \circh f \circh h )
  \circv
  ( i \circh j \circh k \circh \ell )
  \circv
  ( m \circh n \circh p \circh q ).
  \end{array}
  \end{equation}
The geometric realization of this identity has the following form:
\vspace{1mm}
  \[
  \begin{array}{c}
  \begin{tikzpicture}[draw=black, x=\Size, y=\Size]
    \node [Square] at ($(0,0)$) {$a$};
    \node [Square] at ($(1,0)$) {$b$};
    \node [Square] at ($(2,0)$) {$c$};
    \node [Square] at ($(3,0)$) {$d$};
    \node [Square] at ($(0,-1)$) {$e$};
    \node [Square] at ($(1,-1)$) {$f$};
    \node [Square] at ($(2,-1)$) {$g$};
    \node [Square] at ($(3,-1)$) {$h$};
    \node [Square] at ($(0,-2)$) {$i$};
    \node [Square] at ($(1,-2)$) {$j$};
    \node [Square] at ($(2,-2)$) {$k$};
    \node [Square] at ($(3,-2)$) {$\ell$};
    \node [Square] at ($(0,-3)$) {$m$};
    \node [Square] at ($(1,-3)$) {$n$};
    \node [Square] at ($(2,-3)$) {$p$};
    \node [Square] at ($(3,-3)$) {$q$};
  \end{tikzpicture}
  \end{array}
  \equiv
  \begin{array}{c}
  \begin{tikzpicture}[draw=black, x=\Size, y=\Size]
    \node [Square] at ($(0,0)$) {$a$};
    \node [Square] at ($(1,0)$) {$b$};
    \node [Square] at ($(2,0)$) {$c$};
    \node [Square] at ($(3,0)$) {$d$};
    \node [Square] at ($(0,-1)$) {$e$};
    \node [Square] at ($(1,-1)$) {$g$};
    \node [Square] at ($(2,-1)$) {$f$};
    \node [Square] at ($(3,-1)$) {$h$};
    \node [Square] at ($(0,-2)$) {$i$};
    \node [Square] at ($(1,-2)$) {$j$};
    \node [Square] at ($(2,-2)$) {$k$};
    \node [Square] at ($(3,-2)$) {$\ell$};
    \node [Square] at ($(0,-3)$) {$m$};
    \node [Square] at ($(1,-3)$) {$n$};
    \node [Square] at ($(2,-3)$) {$p$};
    \node [Square] at ($(3,-3)$) {$q$};
  \end{tikzpicture}
  \end{array}
  \]
This identity can be interpreted as a generalized Eckmann-Hilton argument, with the squares around
the border providing the necessary room to manoeuvre that is provided by the units in the original case.

For further developments based on Kock's work, see DeWolf \cite{DeWolf2013}.
In particular, Selinger \cite[Prop.~3.2.4]{DeWolf2013} has used a similar argument in degree 10 to prove 
that in cancellative double semigroups the two operations are equal.
For generalizations of the interchange relation in universal algebra, see Padmanabhan and Penner \cite{PP2006}.

\subsection*{Graph interpretation}

The present paper puts the earlier results in perspective by developing a general theory of commutativity 
properties in double semigroups.
Our underlying point of view is a graph interpretation which allows systematic reasoning about the problem.
We show that commutativity properties can be expressed in terms of cycles in directed graphs 
with edges labelled by permutations.
For degree $n$, the set of vertices of the directed graph $F(n)$ consists of all multilinear monomials $m$ 
of degree $n$ in the free double semigroup, and the set of edges $m \to m'$ of $F(n)$ consists of all 
consequences of the interchange relation stating that $m$ can be converted into $m'$ by one application 
of \eqref{interchange}.

The symmetric group $S_n$ acts on $F(n)$ by permuting the \emph{positions} of the variables in the monomials, 
and produces the quotient graph $G(n)$ with the covering projection $p\colon F(n) \to G(n)$.
The directed graph $G(n)$ consists of all association types (composable configurations involving $n$ 
elements in a double semigroup) connected by moves which represent applications of the interchange relation 
\eqref{interchange} and are labelled by the corresponding permutations.  
The key point is that a commutativity property can now be interpreted as a directed cycle in $G(n)$ for which 
the product of the corresponding permutations is nontrivial.

Our hope is that the conceptual insight provided by this graph interpretation may lead to further developments 
of the theory of double semigroups.

\subsection*{Outline of the paper}

Section \ref{graphtheorysection} provides the precise definitions for our application of
graph theory to the problem of commutativity in double semigroups.
Section \ref{section45678} describes our search for nontrivial cycles in degree $n \le 8$ 
using the computer algebra system Maple, with examples to illustrate our methods.
For every cycle in these degrees the product of the edge permutations is the identity, and so there are no
commutativity properties.
Section \ref{sectiondegree9} extends our computations to degree 9 and presents a complete list of connected
components of $G(n)$ which contain nontrivial cycles.
For each of these 16 components, we describe the commutativity properties both algebraically and geometrically,
and provide self-contained proofs.
(None of the corresponding geometric realizations is the $3 \times 3$ square.)
Section \ref{sectionconclusion} contains some concluding remarks and suggestions for further research.

%%%%%%%%%%%%%%%%%%%%%%%%%%%%%%%%%%%%%%%%%%%%%%%%%%%%%%%%%%%%%%%%%%%%%%%%%%%%%%%%%%%%%%%%%%%%%%%%

\section{The covering map of directed graphs $p\colon F(n) \to G(n)$} \label{graphtheorysection}

We fix an integer $n \ge 1$ and a set $X = \{ a_1, \dots, a_n \}$ of $n$ indeterminates.

\begin{definition} \label{definitionassociationtypes}
We write $VG(n)$ for the set of \textbf{association types} in degree $n$: the possible placements 
of balanced parentheses and binary operation symbols in a monomial of degree $n$ with two associative 
operations $\{ \circh, \circv \}$.
\end{definition}

In Definition \ref{definitionassociationtypes} we do not consider the interchange identity, only the 
two associativities, so we are dealing with the multiplicative structure underlying two-associative algebras 
in the sense of Loday \cite{Zinbiel2012}.
The arguments in the monomial are not specified, but their positions will be identified from left to right
with the natural numbers $1, \dots, n$.
If we are careful, we can identify the arguments $1, \dots, n$ with the indeterminates 
$a_1, \dots, a_n$ but then we must remember that permutations in $S_n$ act on the positions and not
on the subscripts of the indeterminates; this is especially important when we compose permutations.

\begin{definition} \label{definitionmultilinearmonomials}
We write $VF(n)$ for the set of \textbf{multilinear monomials} in degree $n$; they are obtained from 
the association types by inserting the indeterminates $a_1, \dots, a_n$ from left to right and applying 
all permutations $\sigma \in S_n$.
In this case (and only this case) it does not matter whether we apply $\sigma$ to the positions or 
to the subscripts of the indeterminates, so after applying $\sigma$ the underlying indeterminates 
from left to right are $a_{\sigma(1)}, \dots, a_{\sigma(n)}$.
\end{definition}

\begin{definition} \label{definitionforgetfulmap}
The \textbf{forgetful map} $p\colon VF(n) \to VG(n)$ assigns to a multilinear monomial $m$ 
its association type $p(m)$ by ignoring the permutation of the indeterminates.
As before, if we are careful, we may say that $p(m)$ is obtained from $m$ by replacing 
$a_{\sigma(1)}, \dots, a_{\sigma(n)}$ by the identity permutation $a_1, \dots, a_n$.
\end{definition}

The sets $VF(n)$ and $VG(n)$ are the vertex sets of the directed graphs $F(n)$ and $G(n)$ which 
will be defined shortly; the forgetful map $p$ is clearly surjective.
First we need to determine the sizes of the sets $VF(n)$ and $VG(n)$.

\begin{definition} \label{smalldefinition}
The \textbf{small Schr\"oder number} $t(n)$ is the number of distinct ways to insert balanced 
parentheses into a sequence of $n$ arguments such that each pair of parentheses encloses two or more arguments 
or parenthesized subsequences; parentheses enclosing a single argument are not allowed.
More formally, as in Stanley \cite[p.~345]{Stanley1997}, we may recursively define \emph{bracketings} 
as follows: 
the symbol $x$ is a bracketing, and for $k \ge 2$ the symbol $( w_1 \cdots w_k )$ is a bracketing 
where each $w_i$ is a bracketing for $1 \le i \le k$.
Then $t(n)$ is the number of distinct bracketings which contain exactly $n$ occurrences of $x$.
\end{definition}

\begin{definition}
The \textbf{large Schr\"oder number} $T(n)$ is the number of paths in the $xy$-plane from the origin $(0,0)$
to the point $(n,n)$ that do not rise above the line $y = x$ and which use only steps north $(0,1)$, east 
$(1,0)$, or northeast $(1,1)$.
\end{definition}

The sequences $t(n)$ and $T(n)$ appeared in 1870 in the work of Schr\"oder \cite{Schroder1870} on 
combinatorial problems.
For a detailed discussion, and for other combinatorial interpretations of the small and large 
Schr\"oder numbers, see Stanley \cite[p.~177; Exercise 6.39, p.~239]{Stanley1999}.
For further information about the Schr\"oder numbers and their history, see
Stanley \cite{Stanley1997}, Habsieger et al.~\cite{HKL1998} and Acerbi \cite{Acerbi2003}.

\begin{lemma} \label{lemmadoubling}
The small and large Schr\"oder numbers are related by the equations 
  \[
  T(1) = t(1) = 1, \qquad\qquad T(n) = 2 \, t(n) \qquad (n > 1).
  \]
\end{lemma}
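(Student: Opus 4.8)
The plan is to pass to ordinary generating functions, where the entire statement collapses to a single algebraic identity. I would set $s(x) = \sum_{n \ge 1} t(n)\,x^{n}$ for the bracketing counts, and write the path-count series as $\sum_{n \ge 1} T(n)\,x^{n} = x\,R(x)$, where $R(x) = \sum_{m \ge 0} P_{m}\,x^{m}$ and $P_{m}$ is the number of admissible lattice paths to $(m,m)$. The lemma then amounts to the coefficientwise reading of
\[
x\,R(x) = 2\,s(x) - x ,
\]
because $2\,s(x) - x = x + \sum_{n \ge 2} 2\,t(n)\,x^{n}$: comparing coefficients of $x^{n}$ gives $T(n) = 2\,t(n)$ for $n > 1$, while the isolated term $-x$ forces the exceptional value $T(1) = 1$. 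Hence it suffices to find $s$ and $R$ in closed form and subtract.

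For $s(x)$ I would read off the recursive description of bracketings in Definition~\ref{smalldefinition}: a bracketing is either the single symbol $x$, or an enclosure $( w_{1} \cdots w_{k} )$ of $k \ge 2$ bracketings. Translating this directly into generating functions gives
\[
s = x + \sum_{k \ge 2} s^{k} = x + \frac{s^{2}}{1 - s},
\]
which clears to $2 s^{2} - (1+x)\,s + x = 0$, and hence $s(x) = \tfrac{1}{4}\bigl( 1 + x - \sqrt{1 - 6x + x^{2}}\,\bigr)$ on choosing the branch with $s(0) = 0$.

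For $R(x)$ I would use a first-return-to-the-diagonal decomposition of the lattice paths. A path to $(m,m)$ is empty, or begins with a northeast step and is followed by an arbitrary admissible path (contributing $x\,R$), or begins with an east step, runs strictly below the diagonal until its first return, and is then followed by an arbitrary admissible path (contributing $x\,R^{2}$). This yields the functional equation
\[
R = 1 + x\,R + x\,R^{2},
\]
whose relevant root is $R(x) = \tfrac{1}{2x}\bigl( 1 - x - \sqrt{1 - 6x + x^{2}}\,\bigr)$. Both closed forms carry the same radical, so the difference $R - 2\,(s/x)$ telescopes to the constant $-1$; equivalently $x\,R = 2 s - x$, which is exactly the identity required above.

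The obstacle I anticipate is bookkeeping rather than insight. The first-return decomposition must be arranged so that the empty path and the two length-one paths are counted consistently, and the single index shift between ``paths to $(m,m)$'' and the path count $T(n)$ of the lemma has to be tracked with care; it is precisely the resulting constant discrepancy $R - 2\,(s/x) = -1$ (the $-x$ above) that accounts for the failure of the doubling at $n = 1$. Once the two quadratics are in hand the remaining step is a one-line computation. As an alternative I would consider a direct bijection splitting the admissible paths of each size $n > 1$ into two classes, each equinumerous with the bracketings on $n$ symbols, but I expect the generating-function route to be shorter and less delicate to write up.
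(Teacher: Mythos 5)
Your argument is correct, but it takes a genuinely different route from the paper: the paper offers no proof of Lemma~\ref{lemmadoubling} at all, deferring to the bijective proofs of Shapiro--Sulanke and Deutsch, whereas you give a self-contained analytic derivation. Both of your functional equations are right: the bracketing recursion gives $2s^{2}-(1+x)s+x=0$, the first-return decomposition gives $xR^{2}+(x-1)R+1=0$ (the three cases --- empty, NE-first, E-first --- are exhaustive since an initial N step would immediately violate $y\le x$), and the two solutions share the radical $\sqrt{1-6x+x^{2}}$, so $2s-xR=\tfrac{1}{2}\bigl((1+x)-(1-x)\bigr)=x$ exactly as you claim. One point you were right to flag: the paper's Definition of $T(n)$ (paths to $(n,n)$) is literally inconsistent with its own table of values, which forces $T(n)=P_{n-1}$ in your notation; your choice $\sum_{n\ge1}T(n)x^{n}=xR(x)$ is the only reading compatible with $T(1)=1$ and with the claim $|VG(n)|=T(n)$ later in the paper, so the index shift is a defect of the paper's definition rather than of your proof. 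What the two approaches buy is the usual trade-off: the cited bijections explain \emph{why} the large Schr\"oder numbers are even for $n>1$ by exhibiting an explicit 2-to-1 map (or a parity-reversing involution), which is the more illuminating statement combinatorially, while your computation is shorter, fully self-contained, and as a by-product delivers the closed forms for both generating functions (which the paper separately attributes to Stanley in Lemma~\ref{lemmaschroeder}). As a proof of the lemma as stated, yours is complete.
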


\begin{proof}
This remarkably simple relation between $t(n)$ and $T(n)$ is not at all obvious from the
definitions.
For proofs, see Shapiro and Sulanke \cite{ShapiroSulanke2000} and Deutsch \cite{Deutsch2001}.
\end{proof}

\begin{lemma} \label{lemmaschroeder}
We have the following formula for the large Schr\"oder numbers:
  \[
  |T(1)| = 1,
  \qquad\qquad
  |T(n)| = \frac{1}{n} \sum_{i=1}^n 2^i \binom{n}{i} \binom{n}{i{-}1} \qquad (n > 1).
  \]
The first 10 values are as follows:
  \[
  \begin{array}{ccccccccccc}
  n &\; 1 &\; 2 &\; 3 &\; 4 &\; 5 &\; 6 &\; 7 &\; 8 &\; 9 &\; 10 \\
  T(n) &\; 1 &\; 2 &\; 6 &\; 22 &\; 90 &\; 394 &\; 1806 &\; 8558 &\; 41586 &\; 206098
  \end{array}
  \]
\end{lemma}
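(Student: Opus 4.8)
The plan is to recognize the summand as a Narayana number and thereby reduce the identity to a clean bijection between Schr\"oder and Dyck paths. Since $\frac{1}{n}\binom{n}{i}\binom{n}{i-1} = N(n,i)$ is the Narayana number, the asserted formula reads $|T(n)| = \sum_{i=1}^n N(n,i)\, 2^i = N_n(2)$, the value at $t = 2$ of the Narayana polynomial $N_n(t) = \sum_i N(n,i)\, t^i$. I would first settle the base case $n = 1$ and the normalization by inspection, aligning it with the convention $T(1) = t(1) = 1$ fixed in Lemma~\ref{lemmadoubling}, and then prove the substantive identity $|T(n)| = \sum_i N(n,i)\, 2^i$ for $n > 1$.

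The core is a bijection. Recall that $N(n,i)$ counts Dyck paths to $(n,n)$ --- using only the steps $(1,0)$ and $(0,1)$ and staying weakly below $y = x$ --- with exactly $i$ peaks, where a peak is an east step immediately followed by a north step. Given a large Schr\"oder path to $(n,n)$, I would expand each diagonal step $(1,1)$ into an east step followed by a north step; the result uses only unit steps, still reaches $(n,n)$, still stays weakly below the diagonal, and is therefore a Dyck path in which every expanded diagonal has become a peak at a uniquely determined location. This produces a pair consisting of a Dyck path $D$ together with the distinguished subset of its peaks that arose from diagonals. Conversely, from such a pair one contracts each chosen peak back to a single diagonal step; one checks that contraction preserves the weakly-below-diagonal condition (a peak sits at a point $(x,y)$ with $y \le x$, and the diagonal $(x,y) \to (x{+}1, y{+}1)$ respects $y{+}1 \le x{+}1$) and that the two maps are mutually inverse. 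Hence large Schr\"oder paths to $(n,n)$ are counted by $\sum_D 2^{\#\text{peaks}(D)}$, and grouping the Dyck paths $D$ by their number of peaks yields exactly $\sum_{i=1}^n N(n,i)\, 2^i$.

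An alternative, purely generating-function route is to decompose a Schr\"oder path by its first return to the diagonal, which gives the algebraic relation $x R(x)^2 + (x{-}1) R(x) + 1 = 0$ and hence $R(x) = \bigl(1 - x - \sqrt{1 - 6x + x^2}\bigr)/(2x)$; expanding the radical by the binomial series and collecting coefficients then reproduces the double sum. I expect the bijective route to be cleaner. In either case the main obstacle is the bookkeeping: in the bijective argument, verifying carefully that the diagonal-expansion map is well defined and injective --- that each diagonal yields a peak at a distinct position, so the marked subset can be recovered unambiguously, and that the below-diagonal constraint is preserved in both directions --- and, in the generating-function argument, the coefficient extraction from the square root. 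I would present the bijection in full and relegate the Narayana peak count itself to a citation of Stanley.
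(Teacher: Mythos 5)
Your proposal is correct, but it does something the paper does not even attempt: the paper's entire proof of this lemma is a pointer to the literature (Foata--Zeilberger for the recurrence, Stanley for the generating function, and Coker for the binomial-coefficient expression), whereas you supply a self-contained argument. Your route --- recognizing $\frac{1}{n}\binom{n}{i}\binom{n}{i-1}$ as the Narayana number $N(n,i)$ and exhibiting the peak-marking bijection between large Schr\"oder paths to $(n,n)$ and pairs (Dyck path, marked subset of peaks) --- is the standard bijective proof of $r_n=\sum_i N(n,i)2^i$ (essentially Sulanke's), and your verification that expansion and contraction preserve the weakly-below-diagonal condition and are mutually inverse is exactly the bookkeeping needed. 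What your approach buys is a proof the reader can check on the page, at the cost of importing the Narayana peak count (which you reasonably cite to Stanley); what the paper's approach buys is brevity, since the lemma is only quoted for the purpose of counting association types.

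One concrete warning: the normalization issue you defer to ``inspection'' cannot actually be resolved by inspection, because the paper's statement is internally inconsistent. With the paper's convention $T(1)=t(1)=1$ and $T(n)=2\,t(n)$, the tabulated values $1,2,6,22,\dots$ make $T(n)$ equal to the number of Schr\"oder paths to $(n-1,n-1)$, i.e.\ the standard $r_{n-1}$. Your bijection proves $r_n=\frac{1}{n}\sum_{i=1}^n 2^i\binom{n}{i}\binom{n}{i-1}$, which is the displayed formula read literally against the paper's Definition (paths to $(n,n)$), but it disagrees with the displayed table: at $n=2$ the sum gives $6$ while the table gives $T(2)=2$. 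To match the table (and the identity $|VG(n)|=T(n)$ that the paper actually uses), the formula should have $n-1$ in place of $n$ throughout, i.e.\ $T(n)=\frac{1}{n-1}\sum_{i=1}^{n-1}2^i\binom{n-1}{i}\binom{n-1}{i-1}$ for $n\ge 2$. Your proof is the right proof of the right identity; just state explicitly which indexing you are proving it for, and note the shift.
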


\begin{proof}
For the recurrence relation satisfied by the Schr\"oder numbers, 
see Foata and Zeilberger \cite{FoataZeilberger1997};
for the generating function, see Stanley \cite{Stanley1997}; 
and for the expression in terms of binomial coefficients, see Coker \cite{Coker2004}.
For further information and references, see sequences A001003 and A006318 in the OEIS (oeis.org).
\end{proof}

\begin{remark} \label{treeremark}
We regard each multilinear monomial of degree $n \ge 2$ as a product of factors using 
one of the operations; each of these factors of degree $\ge 2$ is a product of factors using 
the other operation; and so on.
Thus a multilinear monomial can be represented as a planar rooted tree with $n$ leaves 
labelled by some permutation of $a_1, \dots, a_n$ and with each internal node labelled by an operation
symbol $\circh$, $\circv$.
Following any path from the root to a leaf, the operation symbols alternate: the same operation symbol 
does not occur twice consecutively.
We write $Y(m)$ for the tree corresponding to the multilinear monomial $m$.
For example, if $m$ is the left side of Kock's identity \eqref{kockidentity} then $Y(m)$ has this form:
\[
\Tree [ 
.$\circv$ 
[ .$\circh$ $a$ $b$ $c$ $d$ ] 
[ .$\circh$ $e$ $f$ $g$ $h$ ] 
[ .$\circh$ $i$ $j$ $k$ $\ell$ ] 
[ .$\circh$ $m$ $n$ $p$ $q$ ] 
]
\]
The left sides of our commutativity properties in degree 9 correspond to the three trees in Figure
\ref{figure3trees} (see Theorems \ref{theorem3981}, \ref{theorem3989}, \ref{theorem3994} below).
\end{remark}

\begin{figure}[ht]
\[
\begin{array}{c}
\Tree [ 
.$\circv$ 
[ .$\circh$ $a$ $b$ $c$ ] 
[ .$\circh$ $d$ [ .$\circv$ $e$ $f$ ] [ .$\circv$ $g$ $h$ ] $i$ ]
]
\qquad\qquad
\Tree [ 
.$\circh$ 
[ .$\circv$ $a$ $b$ ] 
[ .$\circv$ $c$ $d$ $e$ $f$ ]
[ .$\circv$ $g$ $h$ $i$ ]
]
\\
\qquad\qquad\qquad
\Tree [ 
.$\circh$ 
[ .$\circv$ $a$ $b$ ]
[ .$\circv$ $c$ 
[ .$\circh$ [ .$\circv$ $d$ $e$ ] [ .$\circv$ $f$ $g$ ] [ .$\circv$ $h$ $i$ ] ]
]
]
\end{array}
\]
\caption{Trees representing left sides of new commutativity properties}
\label{figure3trees}
\end{figure}

\begin{lemma} \label{lemmaschroder}
We have $| VG(n) | = T(n)$ and $| VF(n) | = n! \, T(n)$.
\end{lemma}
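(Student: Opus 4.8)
The plan is to prove $|VG(n)| = T(n)$ first, and then deduce $|VF(n)| = n!\,T(n)$ as an easy corollary. The second equation reduces to the first because, by Definition \ref{definitionmultilinearmonomials}, every multilinear monomial is obtained from a unique association type by inserting $a_1, \dots, a_n$ from left to right and applying some $\sigma \in S_n$; since the monomial is multilinear, the resulting left-to-right sequence of indeterminates is $a_{\sigma(1)}, \dots, a_{\sigma(n)}$, which recovers $\sigma$, while the forgetful map of Definition \ref{definitionforgetfulmap} recovers the association type. Hence every fibre of $p\colon VF(n) \to VG(n)$ has cardinality exactly $n!$, giving $|VF(n)| = n!\,|VG(n)|$.

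For the first equation I would pass to the tree model of Remark \ref{treeremark}. An element of $VG(n)$ is an association type, that is, an equivalence class of parenthesizations modulo the two associativity laws; its canonical representative is a planar rooted tree with $n$ leaves in which every internal node is labelled by $\circh$ or $\circv$, every internal node has at least two children, and --- crucially --- no internal node carries the same label as its parent. This last condition is exactly what associativity enforces: a string of adjacent identical operations may be reparenthesized freely, so in the canonical form every maximal run of one operation collapses into a single multi-ary node, and a node of a given label can never sit directly below a node of the same label. Forgetting the labels, such a tree becomes an unlabelled tree with $n$ leaves whose internal nodes all have at least two children; these are precisely the bracketings of Definition \ref{smalldefinition}, counted by the small Schr\"oder number $t(n)$.

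The counting then hinges on a simple observation about the labels. For $n \geq 2$ the tree has at least one internal node (the root), and because the labels must alternate along every root-to-leaf path, the label of each internal node is determined by the parity of its depth once the label of the root is fixed. There are therefore exactly two admissible labelings, one for each choice of root label $\circh$ or $\circv$, so $|VG(n)| = 2\,t(n)$, which equals $T(n)$ by Lemma \ref{lemmadoubling}. For $n = 1$ the tree is a single leaf with no internal nodes and admits only the empty labeling, so $|VG(1)| = t(1) = 1 = T(1)$; this degenerate case is precisely why the doubling in Lemma \ref{lemmadoubling} holds only for $n > 1$, and the two cases then fit together seamlessly.

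I expect the only genuine obstacle to be the careful justification that the canonical representatives of association types are exactly these alternating-labelled Schr\"oder trees: that collapsing adjacent equal operations under associativity produces a well-defined normal form, and that the no-repeated-label condition characterizes exactly the normal forms. Once this structural identification is secured, the enumeration is immediate --- the number of underlying shapes is $t(n)$, each shape admits two admissible labelings when $n > 1$ and one when $n = 1$, and the factor $n!$ for $VF(n)$ follows from the bijective description of the fibres of $p$ given at the outset.
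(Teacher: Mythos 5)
Your proof is correct and follows essentially the same route as the paper: the paper counts coloured bracketings (equivalently, bipartite labelled trees as in Remark \ref{treeremark}), observes that the root label determines all other labels so the count is $2\,t(n) = T(n)$ by Lemma \ref{lemmadoubling}, and multiplies by $n!$ for the free action on positions. Your tree-model formulation is exactly the paper's own restatement of its colouring argument, and your handling of the $n=1$ case and the fibres of $p$ matches the intended proof.
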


\begin{proof}
Recall from Definition \ref{smalldefinition} the notion of bracketing of a sequence of $n$ arguments.
Suppose we now allow two colours of parentheses, say black and white, and impose two conditions:
(i) the parentheses in every balanced pair have the same colour, and (ii) the arguments immediately inside
every pair of black (resp.~white) parentheses are enclosed in white (resp.~black) parentheses.
This does not affect the case $n = 1$ in which $t(1) = 1$ since $x$ contains no parentheses.
For $n \ge 2$, the number of coloured bracketings is $2 \, t(n)$, since once a colour has been chosen for 
the outermost pair of parentheses, conditions (i) and (ii) imply that the colours of the remaining pairs 
of parentheses are completely determined.
Now observe that the placements of black and white parentheses are in bijection with the association types
for two-associative algebras: we merely replace a pair of white (resp.~black) parentheses around certain
arguments by a sequence of white (resp.~black) operation symbols $\circh, \circv$ between the consecutive
arguments.

If we regard association types as bipartite trees as in Remark \ref{treeremark}, then we can express the 
same concept in different words: the association types are bipartite planar rooted trees, so once a colour 
is chosen for the root vertex, the colours of all the other internal vertices are determined.
The conclusion of this reasoning is that the number of association types is equal to $2 \, t(n)$ for all 
$n > 1$, and combining this with Lemma \ref{lemmadoubling}, we obtain $| VG(n) | = T(n)$.
Since the $n$ arguments $a_1, \dots, a_n$ of the multilinear monomial (equivalently, the $n$ leaves of 
the tree) can be assigned arbitrarily to the $n$ positions, we conclude that $| VF(n) | = n! \, T(n)$.
\end{proof}

\begin{figure}[ht]
\begin{align*}
a_1 \circh a_2 \circh a_3 \circh a_4 \circh a_5
&=
a_1 \circh ( a_2 \circh ( a_3 \circh ( a_4 \circh a_5 ) ) )
\\
\begin{array}{c}
\Tree [ .$\circh$ $a_1$ $a_2$ $a_3$ $a_4$ $a_5$ ]
\end{array}
&=
\begin{array}{c}
\Tree [ .$\circh$ $a_1$ [ .$\circh$ $a_2$ [ .$\circh$ $a_3$ [ .$\circh$ $a_4$ $a_5$ ] ] ] ]
\end{array}
\end{align*}
\caption{Unique factorization in the presence of associativity}
\label{normalassociativity}
\end{figure}

\begin{remark} \label{remarkunique}
When no ambiguity is possible, we omit the parentheses in an associative product.
However, for computational purposes, it is essential to have a unique way to represent each associative product.
In order to guarantee unique factorization in the presence of associativity, we assume that whenever
more than two factors are multiplied by the same operation, the parentheses are right-justified:
the monomial is fully parenthesized and only left multiplications are used.
This is a useful convention for computer programming but can be misleading conceptually, as the bipartite 
property is lost.
Figure \ref{normalassociativity} gives a simple example.
\end{remark}

\begin{definition} \label{definitiontotalorder}
With the convention of Remark \ref{remarkunique}, every association type $t$ 
of degree $n \ge 2$ can be written uniquely as either $t = t' \circh t''$ or $t = t' \circv t''$.
This permits us to define recursively a \textbf{total order on association types}.
The basis is the unique association type in degree 1.
Let $t_1$ and $t_2$ be association types of degrees $n_1$ and $n_2$ respectively;
then $t_1$ precedes $t_2$ (denoted $t_1 \prec t_2$) if and only if:
  \begin{itemize}
  \item[(i)]
  $n_1 < n_2$: 
  first we look at the number of leaves;
  or
  \item[(ii)]
  $n_1 = n_2$ and $t_1 = t'_1 \circh t''_1$ and $t_2 = t'_2 \circv t''_2$: 
  next we look at the operation at the root with the convention that $\circh$ precedes $\circv$;
  or
  \item[(iii)]
  $n_1 = n_2$ and $t_1 = t'_1 \ast t''_1$ and $t_2 = t'_2 \ast t''_2$  with the same operation $\ast$, 
  and either $t'_1 \prec t'_2$, or $t'_1 = t'_2$ and $t''_1 \prec t''_2$:
  finally we use recursion on the unique factorization of each association type into two factors. 
  \end{itemize}
\end{definition}

We now have in place the information we need on the vertices of our directed graphs,
so we turn to the definition of the edges in terms of the interchange relation.

\begin{definition} \label{definitionconsequences}
The set $C(n)$ of \textbf{consequences in degree $n$ of the interchange relation}  
is defined inductively starting with $C(n) = \emptyset$ for $1 \le n \le 3$ and $C(4) = \{ m_1 \equiv m_2 \}$
where $m_1 \equiv m_2$ is the interchange relation \eqref{interchange}.
For $n \ge 4$ we consider a relation $R \in C(n)$ written as $m_1 \equiv m_2$ for multilinear monomials
$m_1, m_2 \in VF(n)$.
The consequences of $R$ in degree $n+1$ are defined as follows:
  \begin{itemize}
  \item[(i)]
For each $i \in \{1,\dots,n\}$ and $\ast \in \{\circh,\circv\}$ we substitute $a_i \ast a_{n+1}$ for $a_i$
in $m_1$ and $m_2$ and obtain these $2n$ relations in degree $n+1$:
  \[
  m_1 \big|_{a_i = a_i \ast a_{n+1}} \equiv \, m_2 \big|_{a_i = a_i \ast a_{n+1}}
  \]
  \item[(ii)]
For each $\ast \in \{\circh,\circv\}$ we right- or left-multiply $m_1$ and $m_2$ by $a_{n+1}$ and obtain 
these 4 relations in degree $n+1$:
  \[
  m_1 \ast a_{n+1} \,\equiv\, m_2 \ast a_{n+1},
  \qquad
  a_{n+1} \ast m_1 \,\equiv\, a_{n+1} \ast m_2.
  \]
  \end{itemize}
We write $c(R)$ for the set of all consequences of $R$ in degree $n+1$, and define
  \[
  C(n+1) = \bigcup_{R \in C(n)} c(R).
  \]
It follows immediately from (i) and (ii) that for $n \ge 4$, every element of $C(n)$ has the form 
$m_1 \equiv m_2$ where $m_1$ and $m_2$ are multilinear monomials of degree $n$.
\end{definition}

\begin{remark}
Definition \ref{definitionconsequences} implies that
  \[
  | C(n) | = \frac{ 2^{n+1} (n{+}1)! }{ 2^5 5! } \qquad (n \ge 4).
  \]
Since $|C(n)|$ grows super-exponentially, for computational purposes it is important to reduce the number 
of consequences as much as possible.
Consider $R \in C(n)$ written as $m_i \equiv m_j$ where multilinear monomials $m_i, m_j$ 
have association types $t_i, t_j$ and permutations $\sigma_i, \sigma_j$ of the indeterminates;
we may assume $t_i \prec t_j$.
If we apply the same permutation in $S_n$ to both $m_i$ and $m_j$ then we obtain an equivalent relation.
In particular, applying $\sigma_i^{-1}$ gives a relation in which the underlying permutation of 
$\sigma_i^{-1} m_i$ is the identity permutation $\iota$;
this result is denoted $N(R)$ and called the normal form of $R$:
  \[
  \overbrace{a_1 \, a_2 \, \cdots \, a_{n-1} \, a_n} ^{\text{association type $t_i$}}
  \; \equiv \;
  \overbrace{a_{\sigma_{ij}(1)} a_{\sigma_{ij}(2)} \cdots
  a_{\sigma_{ij}(n-1)} a_{\sigma_{ij}(n)}}^{\text{association type $t_j$}}
  \qquad
  (\sigma_{ij} = \sigma_i^{-1} \sigma_j).
  \]
We write $NC(n) = \{ \, N(R) \mid R \in C(n) \, \}$ for the set of normal forms with repetitions eliminated; 
this significantly reduces the total number as $n$ increases:
  \[
  \begin{array}{l|rrrrrr}
  n &\quad 
  4 & 5 & 6 &\quad 7 &\quad 8 &\quad 9 \\ \midrule
  |C(n)| &\quad 
  1 & 12 & 168 &\quad 2688 &\quad 48384 &\quad 967680 \\
  |NC(n)| &\quad 
  1 & 12 & 98 &\quad 688 &\quad 4482 &\quad 28004 \\
  |NC(n)|/|C(n)| &\quad 
  1 & 1 & {\approx\,}0.583 &\quad {\approx\,}0.256 &\quad {\approx\,}0.0926 &\quad {\approx\,}0.0289
  \end{array}
  \]
\end{remark}

\begin{definition}
Each relation $R \in C(n)$, written as $m_i \equiv m_j$, gives rise to 
two directed edges between the multilinear monomials $m_i, m_j \in VF(n)$:
  \[
  m_i \xrightarrow{\quad e_R \quad} m_j,
  \qquad\qquad
  m_i \xleftarrow{\quad e'_R \quad} m_j.
  \]
The set $EF(n)$ of \textbf{edges of the graph} $F(n)$ consists of all these edges:
  \[
  EF(n) = \bigcup_{R \in C(n)} \{ \, e_R, \, e'_R \, \}.
  \]
More precisely, we write $\sigma_i, \sigma_j$ for the underlying permutations of $m_i, m_j$
and represent the directed edges obtained from $R$ as follows; 
the permutation labelling one edge is the inverse of the permutation labelling the other:
\[
\begin{array}{c}
\begin{tikzpicture}
[
->,
>=stealth',
shorten >=1pt,
auto,
node distance=3cm,
main node/.style={circle,draw,font=\large}
]
  \node[main node] (1) {$m_i$};
  \node[main node] (2) [right of=1] {$m_j$};
  \path[every node/.style={font=\small}]
  (1) edge [bend left] node [above] {$\sigma_{ij} = \sigma_i^{-1} \sigma_j$} (2)
  (2) edge [bend left] node [below] {$\sigma_{ij}^{-1} = \sigma_j^{-1} \sigma_i$} (1)
  ;
\end{tikzpicture}
\end{array}
\]
Since we compose permutations along a directed path, recall that we regard permutations as applying to
the positions of the indeterminates not their subscripts.
\end{definition}

We are now in a position to define the main object of study in this paper:

\begin{definition}
For $n \ge 1$, the \textbf{interchange graph} $F(n)$ is the directed graph with vertex set $VF(n)$ and 
edge set $EF(n)$.
The symmetric group $S_n$ acts freely on $VF(n)$ by permutation of the positions of the indeterminates in 
the monomials.
Since the edges in $EF(n)$ represent relations $m \equiv m'$ between monomials in $VF(n)$,
the action of $S_n$ extends compatibly to the graph $F(n)$.
Hence this action induces a \textbf{quotient graph} $G(n)$ and a \textbf{covering projection} 
$p\colon F(n) \to G(n)$.
\end{definition}

The vertices of the quotient graph $G(n)$ form the set $VG(n)$ of association types 
(Definition \ref{definitionassociationtypes}).
More precisely, these vertices are the orbits of the action of $S_n$ on $VF(n)$;
each orbit consists of all $n!$ multilinear monomials with the same association type.
The set $EG(n)$ of directed edges of $G(n)$ can be described explicitly without reference 
to the underlying set of indeterminates.
Each edge represents a single application of the interchange relation.
Regarding association types as trees, this amounts to identifying a subtree of the association type 
having the form of the left side of the interchange relation \eqref{interchange} and replacing it with 
the right side, keeping track of the permutations of the leaves in the four children $T_1, \dots, T_4$,
and noting the transposition of $T_2$ and $T_3$:
  \[
  \begin{array}{c} \Tree [ .$\circv$ [ .$\circh$ $T_1$ $T_2$ ] [ .$\circh$ $T_3$ $T_4$ ] ] \end{array}
  \quad \longrightarrow \quad
  \begin{array}{c} \Tree [ .$\circh$ [ .$\circv$ $T_1$ $T_3$ ] [ .$\circv$ $T_2$ $T_4$ ] ] \end{array}
  \]
It is convenient to label the edges $EG(n)$ with permutations of the (positions of the) leaves,
but this is not necessary if we keep $F(n)$ in the picture.
To verify that $p$ is a covering projection we return to the explicit descriptions of the graphs: 
given an edge $t \to t'$ in $G(n)$ and a vertex $m$ in $F(n)$ lying over $t$ (thus $m$ is a 
multilinear monomial with association type $t$), there is a unique lifting of the edge in $G(n)$ to
an edge $m \to m'$ in $F(n)$ given by applying the permutation $\sigma$ corresponding to the edge 
$t \to t'$ in $G(n)$ to the vertex $m$ to obtain $m' = \sigma \cdot m$.
The unique path lifting property for the covering follows from the uniqueness of this lifting of edges.

\begin{example} \label{exampledegree4}
In the simplest case $n = 4$, the quotient graph $G(4)$ has 22 vertices: 20 are isolated and 
the other two are linked by a pair of edges forming a connected component which represents the interchange 
identity \eqref{interchange}:
\[
( a \circv b ) \circh ( c \circv d )
\,\,=
\begin{array}{c}
\begin{tikzpicture}
[
->,
>=stealth',
shorten >=1pt,
auto,
node distance=2cm,
main node/.style={circle,draw,font=\normalsize}
]
  \node[main node] (1) {8};
  \node[main node] (2) [right of=1] {18};
  \path[every node/.style={font=\normalsize}]
  (1) edge [bend left] node [above] {$(23)$} (2)
  (2) edge [bend left] node [below] {$(23)$} (1)
  ;
\end{tikzpicture}
\end{array}
=\,\,
( a \circh c ) \circv ( b \circh d )
\]
The numbers on the vertices are the index numbers of the corresponding association types in the total order 
of Definition \ref{definitiontotalorder}.
The permutations labelling the edges are written in cycle decomposition.
\end{example} 

\begin{example} \label{exampledegree5}
In degree 5, the graph $G(5)$ has 90 vertices and 12 edges, with 70 isolated vertices and 8 connected 
components: 4 each of sizes 2 and 3.
One of the 3-vertex components has association types 26, 30 and 68 as its vertices, where the positions 
are indicated by the alphabetical ordering of the indeterminates:
  \[
  (a \circv b) \circh (c \circv (d \circh e)), \qquad
  (a \circv (b \circh c)) \circh (d \circv e), \qquad
  (a \circh b) \circv (c \circh (d \circh e)).
  \]
To find the edges connecting these vertices, we start with the interchange relation \eqref{interchange}, 
separately substitute $d \leftarrow d \circh e$ and $b \leftarrow b \circh e$, and obtain two consequences:
  \begin{align*}
  ( a \circv b ) \circh ( c \circv ( d \circh e ) )
  &\equiv
  ( a \circh c ) \circv ( b \circh ( d \circh e ) ),
  \\
  ( a \circv ( b \circh e ) ) \circh ( c \circv d )
  &\equiv
  ( a \circh c ) \circv ( ( b \circh e ) \circh d )
  \equiv
  ( a \circh c ) \circv ( b \circh ( e \circh d ) ).
  \end{align*}
The first consequence is already in normal form: it gives edges joining association types 26 and 68, 
both labelled by the transposition of positions 2 and 3.
We have right-normed the associative product of $b, e, d$ in the second consequence; permuting the 
variables gives the following normal form:
  \[
  ( a \circv ( b \circh c ) ) \circh ( d \circv e ) \equiv ( a \circh d ) \circv ( b \circh ( c \circh e ) ).
  \]
The second consequence gives edges joining association types 30 and 68, with the first labelled 
by $(234)$ in terms of positions, or $(243)$ in terms of indeterminates.
Putting this together, we obtain this connected component of $G(5)$:
\begin{equation}
\label{degree5component}
\tag{$\infty$}
\begin{tabular}{c}
\begin{tikzpicture}
[
->,
>=stealth',
shorten >=1pt,
auto,
node distance=3cm,
main node/.style={circle,draw,font=\normalsize}
]
  \node[main node] (1) {26};
  \node[main node] (2) [right of=1] {68};
  \node[main node] (3) [right of=2] {30};
  \path[every node/.style={font=\normalsize}]
  (1) edge [bend left] node [above] {$(23)$} (2)
  (2) edge [bend left] node [below] {$(23)$} (1)
  (2) edge [bend left] node [above] {$(243)$} (3)
  (3) edge [bend left] node [below] {$(234)$} (2)
  ;
\end{tikzpicture}
\end{tabular}
\end{equation}
To illustrate the composition of edges, we travel from vertex 30 (with the identity permutation) to vertex 26.
The first and second edges represent these consequences:
  \begin{align*}
  ( a \circv ( b \circh c ) ) \circh ( d \circv e )
  &\equiv
  ( a \circh d ) \circv ( b \circh ( c \circh e ) ),
  \\
  ( a \circh b ) \circv ( c \circh ( d \circh e ) )
  &\equiv
  ( a \circv c ) \circh ( b \circv ( d \circh e ) ).
  \end{align*}
To compose the permutations, we substitute the indeterminates in the right side of the first consequence into 
both sides of the second, and obtain:
  \[
  ( a \circh d ) \circv ( b \circh ( c \circh e ) )
  \equiv
  ( a \circv b ) \circh ( d \circv ( c \circh e ) ).
  \]
Combining this with the first consequence produces the composition of the two consequences corresponding to 
two applications  of the interchange relation \eqref{interchange}:
  \[
  ( a \circv ( b \circh c ) ) \circh ( d \circv e )
  \equiv
  ( a \circv b ) \circh ( d \circv ( c \circh e ) ).
  \]
The right side of this result corresponds to the equation $(23)(234) = (34)$ obtained by composing the
permutations along the bottom arrows of \eqref{degree5component}.
\end{example}

The next definition expresses commutativity properties in double semigroups in terms of the directed graphs
$F(n)$ and $G(n)$ and the covering map $p\colon F(n) \to G(n)$.

\begin{definition} \label{definitionnontrivial}
A \textbf{commutativity property} in degree $n$ for double semigroups corresponds to a directed cycle $C$ 
in the quotient graph $G(n)$ for which the product of the permutations along the edges is not the identity.
In this case we call $C$ a \textbf{nontrivial cycle}. 
A nontrivial cycle is a closed path in $G(n)$ for which some  (and hence any) lifting to $F(n)$ is not closed.  
If a nontrivial cycle passes through association type $t$, then we may regard $t$ as the first and last vertex 
of the cycle, and obtain a commutativity property of the form $m \equiv m'$, 
where both monomials have association type $t$ but different permutations of the indeterminates.
\end{definition}

The directed graph $G(n)$ decomposes into a number of isolated vertices together with a number of 
(nontrivial connected) components.
If association types $t_i$ and $t_j$ are vertices in the same component, then any multilinear monomial 
$m_i$ with association type $t_i$ can be rewritten by a sequence of consequences of the interchange relation 
\eqref{interchange} as a suitably permuted monomial $m_j$ with association type $t_j$.
This is the unique lifting of a path in $G(n)$ to a path in $F(n)$.
It follows that the number of distinct monomials in $n$ variables in the free double semigroup on one generator 
is equal to the sum of the number of isolated vertices in $G(n)$ and the number of (nontrivial connected) 
components in $G(n)$.
Since we are concerned with directed cycles in $G(n)$, we may study the connected components independently.
We choose one of the association types $t_i$ in each component as the normal form; for computational convenience
we usually take the minimal association type in the total order of Definition \ref{definitiontotalorder}.
We then determine how every other association type $t_j$ in that component can be reduced to $t_i$ 
with some corresponding permutation of the indeterminates by following a directed path from $t_j$ to $t_i$.

Although we will not pursue this point of view, we mention that the directed graph $G(n)$ can be regarded as 
the generating set for a groupoid \cite{Brown1987}: the objects are the vertices and the morphisms are the 
identity maps together with all compositions of the directed edges.
This groupoid is not necessarily free: directed cycles express relations that must be satisfied by the generators.

\begin{remark}
The existence of nontrivial cycles amounts to nontrivial monodromy of the covering map $p\colon F(n) \to G(n)$.  
For a fixed vertex $t \in VG(n)$, the fundamental group $\pi(G(n),t)$ acts on the fiber $\mathcal{F} = p^{-1}(t)$,
which consists of all bijections from the leaves of $Y(t)$ to the indeterminates $X$ 
(all assignments of indeterminates to the positions in the association type).
The monodromy group is the image of the group action $\pi(G(n),t) \to \mathrm{Aut}(\mathcal{F})$.
This conceptual description of $G(n)$ does not require orderings or normal forms of any kind.  
The role of orderings and normal forms is merely to allow us to choose a section to the covering map 
$p\colon F(n) \to G(n)$, individually on vertices and on edges (and clearly this section is 
far from being a graph homomorphism).
These choices are very useful, indeed essential, for the computer algebra, but it must be emphasized that they
are secondary, not intrinsic to the statements of the results or to the definitions of the 
objects under study.

The topological interpretations of graphs go through the category of groupoids: a path in the directed graph 
$G(n)$ is just an arrow in the free groupoid on $G(n)$, the fundamental group $\pi(G(n),t)$ is just the vertex 
group in the free groupoid, and so on.  
The consequences of the interchange relation generate a groupoid (they do so globally: it is not necessary 
to treat the connected components individually).  
The meaning of the term `generates' in this context deserves further analysis.  
One option is to generate the free groupoid on $G(n)$.
The monodromy groups of the covering projection for the graphs coincide with the monodromy groups of the 
covering projection for groupoids generated by the graphs.

\begin{figure}[ht]
\[
\begin{array}{cc}
\begin{array}{c}
\begin{tikzpicture}[draw=black, x=\Size, y=\Size]
\node [Square] at ($(0,0)$) {$a$};
\node [Square] at ($(1,0)$) {$b$};
\node [Square] at ($(2.4,0)$) {$c$};
\node [Square] at ($(0,-1)$) {$d$};
\node [Square] at ($(1,-1)$) {$e$};
\node [Square] at ($(2.4,-1)$) {$f$};
\end{tikzpicture}
\\
\big( ( a \circv d ) \circh ( b \circv e ) \big) \circh ( c \circv f )
\end{array}
&\qquad
\begin{array}{c}
\begin{tikzpicture}[draw=black, x=\Size, y=\Size]
\node [Square] at ($(0,0)$) {$a$};
\node [Square] at ($(1.4,0)$) {$b$};
\node [Square] at ($(2.4,0)$) {$c$};
\node [Square] at ($(0,-1)$) {$d$};
\node [Square] at ($(1.4,-1)$) {$e$};
\node [Square] at ($(2.4,-1)$) {$f$};
\end{tikzpicture}
\\
( a \circv d ) \circh \big( ( b \circv e ) \circh ( c \circv f ) \big) 
\end{array}
\\
\\
\begin{array}{c}
\begin{tikzpicture}[draw=black, x=\Size, y=\Size]
\node [Square] at ($(0,0)$) {$a$};
\node [Square] at ($(1,0)$) {$b$};
\node [Square] at ($(2.2,0)$) {$c$};
\node [Square] at ($(0,-1.4)$) {$d$};
\node [Square] at ($(1,-1.4)$) {$e$};
\node [Square] at ($(2.2,-1.4)$) {$f$};
\end{tikzpicture}
\\
\big( ( a \circh b ) \circh c \big) \circv \big( ( d \circh e ) \circh f \big)
\end{array}
&\qquad
\begin{array}{c}
\begin{tikzpicture}[draw=black, x=\Size, y=\Size]
\node [Square] at ($(0,0)$) {$a$};
\node [Square] at ($(1.2,0)$) {$b$};
\node [Square] at ($(2.2,0)$) {$c$};
\node [Square] at ($(0,-1.4)$) {$d$};
\node [Square] at ($(1.2,-1.4)$) {$e$};
\node [Square] at ($(2.2,-1.4)$) {$f$};
\end{tikzpicture}
\\
\big( a \circh ( b \circh c ) \big) \circv \big( d \circh ( e \circh f ) \big)
\end{array}
\end{array}
\]
\vspace{-4mm}
\caption{Equivalent factorizations of the $2 \times 3$ array}
\label{2x3array}
\end{figure}

However, there are obvious relations to impose; for convenience we will use the term \emph{primitive move}
to mean the application of a consequence of the interchange relation.
Two different sequences of primitive moves deserve to be identified when (for example) doing one move 
followed by another move in a remote region of the association type: it is reasonable to consider this 
the same as doing the moves in the opposite order.  
Similarly, associativity accounts to some extent for different ways of expressing an arrow in terms of 
primitive moves: consider the $2 \times 3$ array of boxes, as illustrated in Figure \ref{2x3array}.
The arrow from the vertical-horizontal 
factorization to the horizontal-vertical factorization can be realized in different ways as a composition of 
primitive moves, depending on the way in which the triple composition is bracketed.
See the discussion of Figure \ref{degree6component} in the next section for further details on this issue.
The groupoid generated by the graph as understood in this sense is clearly more complicated than the free 
groupoid.  
However, imposing these relations (which need to be made precise) will not affect the monodromy 
groups, since paths declared to be equal induce the same permutation of the elements.
\end{remark}

%%%%%%%%%%%%%%%%%%%%%%%%%%%%%%%%%%%%%%%%%%%%%%%%%%%%%%%%%%%%%%%%%%%%%%%%%%%%%%%%%%%%%%%%%%%%%%%%

\section{Every cycle is trivial in degree $n \le 8$} \label{section45678}

In this section we describe our computational verification that that there are no commutativity relations
in degree $n \le 8$.
We may regard a consequence $R$ of the interchange relation in degree $n$ as the equality of two geometric 
realizations.
Since the two outermost arguments of the original interchange relation \eqref{interchange} remain unchanged, 
we expect that the elements in the boxes on the boundary of the diagram realizing $R$ cannot be permuted.
This principle (which needs to be made precise) quickly rules out the possibility of nontrivial cycles 
for most association types of low degree.
Nonetheless, it is clearly much simpler to provide detailed examples to illustrate our computational methods
when the degree $n$ is small.

The results described in this section were obtained using computer algebra, especially the Maple package 
\texttt{GraphTheory}.
We wrote our own procedures to generate the association types and the normalized consequences of
the interchange relation.
We then used \texttt{GraphTheory} to decompose $G(n)$ into its connected components and 
to find a generating set for the cycles in each component.
We wrote our own procedures to decide if any cycles were nontrivial in the sense of 
Definition \ref{definitionnontrivial}.

In the following examples, we label the vertices of $G(n)$ by the index numbers of their association types: 
that is, we write $i$ instead of $t_i$.

\subsection*{Degree 5}

The graph $G(5)$ was introduced in Example \ref{exampledegree5} together with one of its (nontrivial 
connected) components.
Figure \ref{degree5table} gives the index numbers of the minimal vertices and the corresponding association types.
Figure \ref{degree5components} displays the 8 components; it is clear that all cycles are trivial.
Transposing $\circh$ and $\circv$ partitions the vertices into 10 orbits:
$8 / 63$,
$18 / 53$,
$25 / 73$,
$26 / 69$,
$27 / 68$,
$28 / 70$,
$30 / 76$,
$32 / 74$,
$34 / 78$,
$41 / 87$.
This action by the cyclic group of order 2 extends to every degree.

\begin{figure}[ht]
\begin{alignat*}{3}
 8&\colon \, a \circh ( b \circv c ) \circh ( d \circv e )  &\qquad
18&\colon \, a \circh ( ( b \circh c ) \circv ( d \circh e ) )  &\qquad
25&\colon \, ( a \circv b ) \circh ( c \circv d ) \circh e  \\
26&\colon \, ( a \circv b ) \circh ( c \circv ( d \circh e ) )  &\qquad
27&\colon \, ( a \circv b ) \circh ( c \circv d \circv e )  &\qquad
28&\colon \, ( a \circv b ) \circh ( ( c \circh d ) \circv e )  \\
30&\colon \, ( a \circv ( b \circh c ) ) \circh ( d \circv e )  &\qquad
32&\colon \, ( a \circv b \circv c ) \circh ( d \circv e )  &\qquad
34&\colon \, ( ( a \circh b ) \circv c ) \circh ( d \circv e )  \\
41&\colon \, ( ( a \circh b ) \circv ( c \circh d ) ) \circh e  &\qquad
53&\colon \, a \circv ( ( b \circv c ) \circh ( d \circv e ) )  &\qquad
63&\colon \, a \circv ( b \circh c ) \circv ( d \circh e )  \\
68&\colon \, ( a \circh b ) \circv ( c \circh d \circh e )  &\qquad
69&\colon \, ( a \circh b ) \circv ( c \circh ( d \circv e ) )  &\qquad
70&\colon \, ( a \circh b ) \circv ( ( c \circv d ) \circh e )  \\
73&\colon \, ( a \circh b ) \circv ( c \circh d ) \circv e  &\qquad
74&\colon \, ( a \circh b \circh c ) \circv ( d \circh e )  &\qquad
76&\colon \, ( a \circh ( b \circv c ) ) \circv ( d \circh e )  \\
78&\colon \, ( ( a \circv b ) \circh c ) \circv ( d \circh e )  &\qquad
87&\colon \, ( ( a \circv b ) \circh ( c \circv d ) ) \circv e
\end{alignat*}
\vspace{-7mm}
\caption{Association types of connected components in degree 5}
\label{degree5table}
\end{figure}

\begin{figure}[ht]
\begin{tabular}{cc}
\begin{tabular}{c}
\begin{tikzpicture}
[
->,
>=stealth',
shorten >=1pt,
auto,
node distance=2cm,
main node/.style={circle,draw,font=\small}
]
  \node[main node] (1) {8};
  \node[main node] (2) [right of=1] {18};
  \path[every node/.style={font=\small}]
  (1) edge [bend left] node [above] {$(34)$} (2)
  (2) edge [bend left] node [below] {$(34)$} (1)
  ;
\end{tikzpicture}
\\
\begin{tikzpicture}
[
->,
>=stealth',
shorten >=1pt,
auto,
node distance=2cm,
main node/.style={circle,draw,font=\small}
]
  \node[main node] (1) {25};
  \node[main node] (2) [right of=1] {41};
  \path[every node/.style={font=\small}]
  (1) edge [bend left] node [above] {$(23)$} (2)
  (2) edge [bend left] node [below] {$(23)$} (1)
  ;
\end{tikzpicture}
\\
\begin{tikzpicture}
[
->,
>=stealth',
shorten >=1pt,
auto,
node distance=2cm,
main node/.style={circle,draw,font=\small}
]
  \node[main node] (1) {53};
  \node[main node] (2) [right of=1] {63};
  \path[every node/.style={font=\small}]
  (1) edge [bend left] node [above] {$(34)$} (2)
  (2) edge [bend left] node [below] {$(34)$} (1)
  ;
\end{tikzpicture}
\\
\begin{tikzpicture}
[
->,
>=stealth',
shorten >=1pt,
auto,
node distance=2cm,
main node/.style={circle,draw,font=\small}
]
  \node[main node] (1) {73};
  \node[main node] (2) [right of=1] {87};
  \path[every node/.style={font=\small}]
  (1) edge [bend left] node [above] {$(23)$} (2)
  (2) edge [bend left] node [below] {$(23)$} (1)
  ;
\end{tikzpicture}
\end{tabular}
&\qquad\qquad
\begin{tabular}{c}
\begin{tikzpicture}
[
->,
>=stealth',
shorten >=1pt,
auto,
node distance=2cm,
main node/.style={circle,draw,font=\small}
]
  \node[main node] (1) {26};
  \node[main node] (2) [right of=1] {68};
  \node[main node] (3) [right of=2] {30};
  \path[every node/.style={font=\small}]
  (1) edge [bend left] node [above] {$(23)$} (2)
  (2) edge [bend left] node [below] {$(23)$} (1)
  (2) edge [bend left] node [above] {$(243)$} (3)
  (3) edge [bend left] node [below] {$(234)$} (2)
  ;
\end{tikzpicture}
\\
\begin{tikzpicture}
[
->,
>=stealth',
shorten >=1pt,
auto,
node distance=2cm,
main node/.style={circle,draw,font=\small}
]
  \node[main node] (1) {69};
  \node[main node] (2) [right of=1] {27};
  \node[main node] (3) [right of=2] {76};
  \path[every node/.style={font=\small}]
  (1) edge [bend left] node [above] {$(23)$} (2)
  (2) edge [bend left] node [below] {$(23)$} (1)
  (2) edge [bend left] node [above] {$(243)$} (3)
  (3) edge [bend left] node [below] {$(234)$} (2)
  ;
\end{tikzpicture}
\\
\begin{tikzpicture}
[
->,
>=stealth',
shorten >=1pt,
auto,
node distance=2cm,
main node/.style={circle,draw,font=\small}
]
  \node[main node] (1) {28};
  \node[main node] (2) [right of=1] {74};
  \node[main node] (3) [right of=2] {34};
  \path[every node/.style={font=\small}]
  (1) edge [bend left] node [above] {$(243)$} (2)
  (2) edge [bend left] node [below] {$(234)$} (1)
  (2) edge [bend left] node [above] {$(34)$} (3)
  (3) edge [bend left] node [below] {$(34)$} (2)
  ;
\end{tikzpicture}
\\
\begin{tikzpicture}
[
->,
>=stealth',
shorten >=1pt,
auto,
node distance=2cm,
main node/.style={circle,draw,font=\small}
]
  \node[main node] (1) {70};
  \node[main node] (2) [right of=1] {32};
  \node[main node] (3) [right of=2] {78};
  \path[every node/.style={font=\small}]
  (1) edge [bend left] node [above] {$(243)$} (2)
  (2) edge [bend left] node [below] {$(234)$} (1)
  (2) edge [bend left] node [above] {$(34)$} (3)
  (3) edge [bend left] node [below] {$(34)$} (2)
  ;
\end{tikzpicture}
\end{tabular}
\end{tabular}
\vspace{-3mm}
\caption{Connected components in degree 5}
\label{degree5components}
\end{figure}

\begin{figure}[ht]
\begin{tikzpicture}
[
->,
>=stealth',
shorten >=1pt,
auto,
node distance=3cm,
thick,
main node/.style={circle,draw,font=\footnotesize}
]
  \node[main node] (1) {98};
  \node[main node] (2) [above right of=1] {108};
  \node[main node] (3) [below right of=1] {144};
  \node[main node] (4) [below right of=2] {310};
  \node[main node] (5) [above right of=4] {118};
  \node[main node] (6) [below right of=4] {128};
  \path[every node/.style={font=\footnotesize}]
    (1) edge [bend left] node {$(45)$} (2)
        edge [bend left] node [below left] {$(23)$} (3)
    (2) edge [bend left] node [above left] {$(45)$} (1)
        edge [bend left] node [below left] {$(243)$} (4)
    (3) edge [bend left] node {$(345)$} (4)
        edge [bend left] node {$(23)$} (1)
    (4) edge [bend left] node {$(234)$} (2)
        edge [bend left] node [above left] {$(354)$} (3)
        edge [bend left] node {$(24)(35)$} (5)
        edge [bend left] node {$(34)$} (6)
    (5) edge [bend left] node {$(24)(35)$} (4)
    (6) edge [bend left] node {$(34)$} (4)
        ;
\end{tikzpicture}
\vspace{-3mm}
\caption{One connected component of $G(6)$}
\label{degree6component}
\end{figure}

\subsection*{Degree 6}

The graph $G(6)$ has 394 vertices and 98 pairs of edges with 254 isolated vertices and 
44 nontrivial connected components: respectively 14, 20, 4, 6 components with 2, 3, 4, 6 vertices.
Two components of size 6 have cycles; they are related by transposing $\circh$ and $\circv$.
The vertices and edges of the component in Figure \ref{degree6component} correspond to the association 
types and consequences of the interchange relation given Figure \ref{degree6info}.
In degree 6 there are cycles, but they are all trivial;
for example, the product of the permutations along the edges of the cycle $[98, 108, 310, 144, 98]$ 
is the identity permutation: $(23)(354)(243)(45) = ()$.
This cycle expresses two different ways of going from vertex 98 to vertex 310; equivalently,
in the $2 \times 3$ array of Figure \ref{2x3array}, from outer-black/inner-white to outer-white/inner-black.
These two ways differ only by an application of associativity, and hence in the groupoid generated 
by the graph $G(6)$, these two paths would be equal.

\begin{figure}[ht]
\begin{alignat*}{2}
 98\colon &( a \circv b ) \circh ( c \circv d ) \circh ( e \circv f )
&\qquad\qquad
108\colon &( a \circv b ) \circh ( ( c \circh d ) \circv ( e \circh f ) )
\\
118\colon &( a \circv ( b \circh c ) ) \circh ( ( d \circh e ) \circv f )
&\qquad\qquad
128\colon &( ( a \circh b ) \circv c ) \circh ( d \circv ( e \circh f ) )
\\
144\colon &( ( a \circh b ) \circv ( c \circh d ) ) \circh ( e \circv f )
&\qquad\qquad
310\colon &( a \circh b \circh c ) \circv ( d \circh e \circh f )
\end{alignat*}
\begin{align*}
( a \circv b ) \circh ( c \circv d ) \circh ( e \circv f ) &\equiv
( a \circv b ) \circh ( ( c \circh e ) \circv ( d \circh f ) )
\\
( a \circv b ) \circh ( c \circv d ) \circh ( e \circv f ) &\equiv
( ( a \circh c ) \circv ( b \circh d ) ) \circh ( e \circv f )
\\
( a \circv b ) \circh ( ( c \circh d ) \circv ( e \circh f ) ) &\equiv
( a \circh c \circh d ) \circv ( b \circh e \circh f )
\\
( a \circv ( b \circh c ) ) \circh ( ( d \circh e ) \circv f ) &\equiv
( a \circh d \circh e ) \circv ( b \circh c \circh f )
\\
( ( a \circh b ) \circv c ) \circh ( d \circv ( e \circh f ) ) &\equiv
( a \circh b \circh d ) \circv ( c \circh e \circh f )
\\
( ( a \circh b ) \circv ( c \circh d ) ) \circh ( e \circv f ) &\equiv
( a \circh b \circh e ) \circv ( c \circh d \circh f )
\end{align*}
\vspace{-7mm}
\caption{Vertices and edges of the component in Figure \ref{degree6component}}
\label{degree6info}
\end{figure}

\begin{definition}
The \textbf{circuit rank} (or cyclomatic number) of a graph $G$ is the minimum number $r$ of edges
which must be removed to eliminate all of the cycles, thus resulting in a forest (a disjoint union of trees).
\end{definition}

\begin{lemma}
If $G$ has $v$ vertices, $e$ edges, and $c$ components (including the isolated vertices), then $r = e - v + c$.
In particular, if $G$ is connected then $r = e - v + 1$.
Every graph has a cycle basis: a set of cycles which generate all of the cycles by means of 
repeated symmetric differences.
Every cycle basis for $G$ has the same number of cycles, and this is called the circuit rank of $G$.
\end{lemma}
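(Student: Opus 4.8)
The plan is to prove all three assertions at once by identifying cycles with vectors in the edge space over the field $\mathbb{F}_2$. First I would let $\mathbb{F}_2^{E}$ denote the vector space whose basis is the edge set of $G$, so that a subset of edges is a vector and the symmetric difference of two edge sets is exactly their sum. The \emph{cycle space} $Z$ is the subspace of all even subgraphs (those in which every vertex has even degree); equivalently it is the kernel of the boundary map $\partial\colon \mathbb{F}_2^{E} \to \mathbb{F}_2^{V}$ sending each edge to the sum of its two endpoints. Every edge set of a graph-theoretic cycle lies in $Z$, and conversely every element of $Z$ decomposes into edge-disjoint cycles; hence a \textbf{cycle basis} in the sense of the statement is precisely a basis of the vector space $Z$, and the assertion that all cycle bases have the same cardinality is just the invariance of dimension for a vector space.

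It remains to compute $\dim Z$ and to match it with the circuit rank. First I would fix a spanning forest $T$ of $G$. Since each of the $c$ components contributes a spanning tree, $T$ has exactly $v - c$ edges, leaving $e - (v - c) = e - v + c$ \emph{chords}. For each chord $\epsilon$, the subgraph $T \cup \{\epsilon\}$ contains a unique cycle $C_\epsilon$, its \emph{fundamental cycle}. I would then show these fundamental cycles form a basis of $Z$: they are linearly independent because $C_\epsilon$ is the only fundamental cycle containing the chord $\epsilon$, and they span because, given any $z \in Z$, adding the fundamental cycles of all chords appearing in $z$ yields an element of $Z$ supported entirely on $T$, which must be zero since a forest contains no nonempty even subgraph. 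Thus $\dim Z = e - v + c$.

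Finally I would connect this dimension to the circuit rank $r$. Deleting the $e - v + c$ chords leaves the forest $T$, so $r \le e - v + c$. For the reverse inequality, if deleting a set $S$ of edges leaves a forest, that forest has $v$ vertices and some number $c' \ge c$ of components (deleting edges never decreases the number of components), hence exactly $v - c'$ edges; therefore $|S| = e - (v - c') = e - v + c' \ge e - v + c$. Combining the two inequalities gives $r = e - v + c$, and the special case $c = 1$ gives $r = e - v + 1$.

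The main obstacle I anticipate is the spanning step of the basis argument: one must verify carefully that the residual even subgraph supported on the acyclic $T$ is forced to be empty, which is where the forest hypothesis does the real work. The independence of the fundamental cycles and the counting of forest edges are routine by comparison.
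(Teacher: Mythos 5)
Your argument is correct and complete. Note, however, that the paper does not actually prove this lemma: its ``proof'' is simply a citation to Berge's \emph{The Theory of Graphs}, so there is no internal argument to compare against. What you have written is the standard self-contained proof: identify the cycle space with the kernel of the $\mathbb{F}_2$-boundary map, count chords of a spanning forest, show the fundamental cycles form a basis (independence via the private chord, spanning via the fact that a forest supports no nonempty even subgraph), and then sandwich the circuit rank between $e-v+c$ and $e-v+c'$ with $c' \ge c$. All of these steps are sound, and your approach has the advantage of making explicit the vector-space structure over $\mathbb{F}_2$ that the paper itself relies on later (the authors build an $r \times e$ incidence matrix of a cycle basis and row-reduce it over $\mathbb{F}_2$, which only makes sense under precisely your identification of cycle bases with bases of $Z$). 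The one point worth flagging is definitional: the lemma as stated describes a cycle basis merely as ``a set of cycles which generate all of the cycles,'' and without a minimality or independence requirement the cardinality claim would be false; your reading of a cycle basis as a vector-space basis of $Z$ consisting of cycles is the correct and intended interpretation, and it is harmless because every element of $Z$ is an edge-disjoint union of cycles, so generating all cycles is equivalent to spanning $Z$.
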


\begin{proof}
See Berge \cite{Berge2001}, starting on page 27.
\end{proof}

\subsection*{Degree 7}

The graph $G(7)$ has 1806 vertices and 688 pairs of edges, with 948 isolated vertices 
and 210 (nontrivial connected) components.
Our computations found the following number of components for each size (number of vertices):
  \[
  \begin{tabular}{lrrrrrrrrrrr}
  vertices &\; 1 &\; 2 &\; 3 &\; 4 &\; 5 &\; 6 &\; 10 &\; 11 &\; 12 &\; 13 \\
  components &\; 948 &\; 52 &\; 84 &\; 24 &\; 4 &\; 24 &\; 12 &\; 2 &\; 4 &\; 4
  \end{tabular}
  \]
The command \texttt{CycleBasis} from the Maple package \texttt{GraphTheory} showed that 22 components
have cycles: for circuit ranks 1, 2, 3 there are 12, 2, 8 components respectively.
Our computations also showed that all of the cycles are trivial.
Of the 8 components with rank 3, four have 10 vertices and four have 13 vertices.

\begin{figure}[ht]
\begin{tikzpicture}
[
->,
>=stealth',
shorten >=1pt,
auto,
node distance=2.0cm,
thick,
main node/.style={circle,draw,font=\footnotesize}
]
  \node[main node] (421) {421};
  \node[main node] (463) [left of=421] {463};
  \node[main node] (470) [right of=421] {470};
  \node[main node] (591) [below=0.6cm of 421] {591};
  \node[main node] (1389) [below=0.6cm of 463] {1389};
  \node[main node] (1460) [below=0.6cm of 470] {1460};
  \node[main node] (541) [left of=1389] {541};
  \node[main node] (505) [right of=1460] {505};
  \node[main node] (503) [below=0.6cm of 1389] {503};
  \node[main node] (543) [below=0.6cm of 1460] {543};
  \node[main node] (1484) [below=0.6cm of 503] {1484};
  \node[main node] (1391) [below=0.6cm of 543] {1391};
  \node[main node] (465) [right of=1484] {465};
  \path[every node/.style={font=\footnotesize}]
  ( 421) edge [above] node {$(45)$} ( 463)
  ( 421) edge node {$(465)$} ( 470)
  ( 421) edge node {$(23)$} ( 591)
  ( 463) edge node {$(243)$} (1389)
  ( 465) edge node {$(243)$} (1391)
  ( 465) edge [above] node {$(2543)$} (1484)
  ( 470) edge node {$(2543)$} (1460)
  ( 503) edge [right] node {$(24)(35)$} (1389)
  ( 503) edge node {$(25364)$} (1484)
  ( 505) edge [above] node {$(25364)$} (1460)
  ( 541) edge node {$(34)$} (1389)
  ( 543) edge node {$(34)$} (1391)
  ( 543) edge [right] node {$(354)$} (1460)
  ( 591) edge [above] node {$(345)$} (1389)
  ( 591) edge [above] node {$(35)(46)$} (1460)
  ;
\end{tikzpicture}
\caption{One connected component of $G(7)$}
\label{degree7figure}
\end{figure}

\begin{example} \label{degree7example}
Figure \ref{degree7figure} displays a component of $G(7)$ with 13 vertices, 15 edges, and circuit rank 3.
The shape is rectangular, since we show only one edge from each pair with its permutation;
the reverse edge has the inverse permutation.
The vertices correspond to the following association types:
  \begin{alignat*}{2}
   421\colon  &( a \circv b ) \circh ( ( c \circv d ) \circh ( e \circv ( f \circv g ) ) ) &\qquad\quad
   463\colon  &( a \circv b ) \circh ( ( c \circh d ) \circv ( e \circh ( f \circv g ) ) ) \\ 
   465\colon  &( a \circv b ) \circh ( ( c \circh d ) \circv ( e \circv ( f \circh g ) ) ) &\qquad\quad
   470\colon  &( a \circv b ) \circh ( ( c \circh ( d \circv e ) ) \circv ( f \circh g ) ) \\ 
   503\colon  &( a \circv ( b \circh c ) ) \circh ( ( d \circh e ) \circv ( f \circv g ) ) &\qquad\quad
   505\colon  &( a \circv ( b \circh c ) ) \circh ( ( d \circh ( e \circv f ) ) \circv g ) \\ 
   541\colon  &( ( a \circh b ) \circv c ) \circh ( d \circv ( e \circh ( f \circv g ) ) ) &\qquad\quad
   543\colon  &( ( a \circh b ) \circv c ) \circh ( d \circv ( e \circv ( f \circh g ) ) ) \\ 
   591\colon  &( ( a \circh b ) \circv ( c \circh d ) ) \circh ( e \circv ( f \circv g ) ) &\qquad\quad
  1389\colon  &( a \circh ( b \circh c ) ) \circv ( d \circh ( e \circh ( f \circv g ) ) ) \\ 
  1391\colon  &( a \circh ( b \circh c ) ) \circv ( d \circh ( e \circv ( f \circh g ) ) ) &\qquad\quad
  1460\colon  &( a \circh ( b \circh ( c \circv d ) ) ) \circv ( e \circh ( f \circh g ) ) \\ 
  1484\colon  &( a \circh ( ( b \circh c ) \circv d ) ) \circv ( e \circh ( f \circh g ) )
  \end{alignat*}
The edges correspond to these consequences of the interchange relation:
\begin{align*}
( a \circv b ) \circh ( c \circv d ) \circh ( e \circv f \circv g ) &\equiv
( a \circv b ) \circh ( ( c \circh e ) \circv ( d \circh ( f \circv g ) ) ) \\
( a \circv b ) \circh ( c \circv d ) \circh ( e \circv f \circv g ) &\equiv
( a \circv b ) \circh ( ( c \circh ( e \circv f ) ) \circv ( d \circh g ) ) \\
( a \circv b ) \circh ( c \circv d ) \circh ( e \circv f \circv g ) &\equiv
( ( a \circh c ) \circv ( b \circh d ) ) \circh ( e \circv f \circv g ) \\
( a \circv b ) \circh ( ( c \circh d ) \circv ( e \circh ( f \circv g ) ) ) &\equiv
( a \circh c \circh d ) \circv ( b \circh e \circh ( f \circv g ) ) \\
( a \circv b ) \circh ( ( c \circh d ) \circv e \circv ( f \circh g ) ) &\equiv
( a \circh c \circh d ) \circv ( b \circh ( e \circv ( f \circh g ) ) ) \\
( a \circv b ) \circh ( ( c \circh d ) \circv e \circv ( f \circh g ) ) &\equiv
( a \circh ( ( c \circh d ) \circv e ) ) \circv ( b \circh f \circh g ) \\
( a \circv b ) \circh ( ( c \circh ( d \circv e ) ) \circv ( f \circh g ) ) &\equiv
( a \circh c \circh ( d \circv e ) ) \circv ( b \circh f \circh g ) \\
( a \circv ( b \circh c ) ) \circh ( ( d \circh e ) \circv f \circv g ) &\equiv
( a \circh d \circh e ) \circv ( b \circh c \circh ( f \circv g ) ) \\
( a \circv ( b \circh c ) ) \circh ( ( d \circh e ) \circv f \circv g ) &\equiv
( a \circh ( ( d \circh e ) \circv f ) ) \circv ( b \circh c \circh g ) \\
( a \circv ( b \circh c ) ) \circh ( ( d \circh ( e \circv f ) ) \circv g ) &\equiv
( a \circh d \circh ( e \circv f ) ) \circv ( b \circh c \circh g ) \\
( ( a \circh b ) \circv c ) \circh ( d \circv ( e \circh ( f \circv g ) ) ) &\equiv
( a \circh b \circh d ) \circv ( c \circh e \circh ( f \circv g ) ) \\
( ( a \circh b ) \circv c ) \circh ( d \circv e \circv ( f \circh g ) ) &\equiv
( a \circh b \circh d ) \circv ( c \circh ( e \circv ( f \circh g ) ) ) \\
( ( a \circh b ) \circv c ) \circh ( d \circv e \circv ( f \circh g ) ) &\equiv
( a \circh b \circh ( d \circv e ) ) \circv ( c \circh f \circh g ) \\
( ( a \circh b ) \circv ( c \circh d ) ) \circh ( e \circv f \circv g ) &\equiv
( a \circh b \circh e ) \circv ( c \circh d \circh ( f \circv g ) ) \\
( ( a \circh b ) \circv ( c \circh d ) ) \circh ( e \circv f \circv g ) &\equiv
( a \circh b \circh ( e \circv f ) ) \circv ( c \circh d \circh g )
\end{align*}
One choice of cycle basis consists of the following three cycles:
  \begin{align*}
  &
  [ 421, 463, 1389, 591, 421 ], \qquad\qquad
  [ 421, 470, 1460, 591, 421 ],
  \\
  &
  [ 421, 463, 1389, 503, 1484, 465, 1391, 543, 1460, 470, 421 ].
  \end{align*}
The compositions of the permutations along the edges of these cycles are as follows;
in every case we obtain the identity permutation:
  \begin{align*}
  &
  (23)(354)(243)(45) = (\,), \qquad\qquad
  (23)(35)(46)(2543)(465) = (\,),
  \\
  &
  (456)(2345)(354)(34)(243)(2345)(25364)(24)(35)(243)(45) = (\,).
  \end{align*}
Since all other cycles can be obtained by repeated symmetric differences from the cycle basis,
it follows that all cycles in this component are trivial.
\end{example}

\subsection*{Degree 8}

The graph $G(8)$ has 8558 vertices and 4482 pairs of edges with 3618 isolated vertices 
and 931 (nontrivial connected) components.
The following table gives the number $c$ of components for each number $v \ge 2$ of vertices:
  \[
  \begin{array}{rrrrrrrrrr}
  v &\; 2 &\; 3 &\; 4 &\; 5 &\; 6 &\; 7 &\; 10 &\; 11 &\; 12 \\
  c &\; 204 &\; 352 &\; 112 &\; 28 &\; 88 &\; 4 &\; 48 &\; 10 &\; 16 \\ \midrule
  v &\; 13 &\; 15 &\; 16 &\; 20 &\; 24 &\; 30 &\; 33 &\; 36 &\; 38 \\
  c &\; 16 &\; 12 &\; 2 &\; 6 &\; 12 &\; 5 &\; 4 &\; 8 &\; 4
  \end{array}
  \]
There are 131 components with nonzero circuit rank; the following table gives 
the number $c$ of components for each circuit rank $r \ge 1$.
Only one component has rank 13, so its set 
of vertices must be invariant under transposing $\circh$ and $\circv$:
  \[
  \begin{array}{rrrrrrrrrrr}
  r &\;  1 &\;  2 &\;  3 &\;  4 &\; 6 &\; 9 &\; 10 &\; 11 &\; 13 &\; 14 \\
  c &\; 44 &\; 14 &\; 34 &\; 12 &\; 8 &\; 8 &\;  4 &\;  2 &\;  1 &\;  4
  \end{array}
  \]
We found that all of the cycles in degree 8 are trivial.

\begin{proposition} \label{degree8nothing}
For $n \le 8$, every cycle in $G(n)$ is trivial in the sense of Definition \ref{definitionnontrivial}:
the product of the permutations along the edges is the identity.
Hence there are no commutativity properties for double semigroups in degree $n \le 8$.
\end{proposition}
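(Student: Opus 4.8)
The plan is to recognize the stated triviality as the vanishing of the \emph{monodromy} of the covering $p\colon F(n)\to G(n)$, and then to reduce the verification to a finite, component-by-component computation. Assigning to each closed path in $G(n)$ the product of the permutations labelling its edges defines a homomorphism $\mu\colon \pi(G(n),t)\to S_n$ from the fundamental group (the vertex group of the free groupoid) at a chosen association type $t$; a cycle is trivial in the sense of Definition \ref{definitionnontrivial} precisely when it lies in $\ker\mu$. Since $G(n)$ is a disjoint union of isolated vertices (which support no cycles) together with finitely many nontrivial connected components, and since every cycle lives in a single component, it suffices to prove $\mu\equiv\iota$ on each component separately.

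Within a single connected component I would use the fact that $\pi(G(n),t)$ is \emph{free} of rank equal to the circuit rank $r=e-v+1$: fixing a spanning tree, the fundamental cycles attached to the $r$ non-tree edges form simultaneously a cycle basis and a free generating set for $\pi(G(n),t)$. Consequently $\mu$ is trivial on the whole component as soon as it is trivial on this one cycle basis, which is exactly the reduction used in Example \ref{degree7example}. The verification then comes down to: (i) listing the vertices $VG(n)$, whose cardinality is $T(n)$ by Lemma \ref{lemmaschroder}, in the total order of Definition \ref{definitiontotalorder}; (ii) generating the labelled edges from the normalized consequences of \eqref{interchange}; (iii) computing the connected components and discarding those of circuit rank $0$, which are trees and hence cycle-free; and (iv) for each remaining component, multiplying the edge permutations around each basis cycle (composing them as acting on \emph{positions}, not on the subscripts of the indeterminates) and confirming the product is $\iota$.

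Two reductions make this finite computation tractable and trustworthy. First, I would work with the normal forms $NC(n)$ rather than the full consequence set $C(n)$: since $|C(n)|$ grows super-exponentially while $|NC(n)|$ is dramatically smaller (e.g.\ $28004$ versus $967680$ in degree $9$), the edge set becomes manageable, and only components of positive circuit rank need to be examined at all. Second, the involution transposing $\circh$ and $\circv$ is a label-preserving automorphism of $G(n)$, so components occur in matched pairs (or are self-matched), roughly halving the work; geometrically, the intuition that the boundary boxes of a realization cannot be permuted suggests in advance that most association types carry no nontrivial cycle, though for a rigorous proof one simply runs the check on all positive-rank components for each $n\in\{4,\dots,8\}$.

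The main obstacle is not conceptual but one of scale and bookkeeping. The sheer number of edges forces the passage to normal forms, and the single most error-prone point is the composition of the edge permutations: because permutations act on positions rather than on indeterminate labels, and because an edge and its reverse carry inverse labels, a direction slip would falsify the product. I would therefore validate the automated arithmetic against the small cases computed by hand---the degree-$5$ component \eqref{degree5component} giving $(23)(234)=(34)$, the degree-$6$ cycle giving $(23)(354)(243)(45)=(\,)$, and the three degree-$7$ basis cycles of Example \ref{degree7example}---before trusting the full sweep. Once the permutation composition is confirmed correct, the finite enumeration of positive-rank components in degrees $4$ through $8$ yields the identity around every basis cycle, so $\mu\equiv\iota$ on every component, establishing that all cycles are trivial and hence that no commutativity property exists for $n\le 8$.
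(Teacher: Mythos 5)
Your proposal is correct and follows essentially the same route as the paper: a finite, component-by-component computer verification that the product of the edge permutations around each basis cycle of every positive-circuit-rank component of $G(n)$, $4 \le n \le 8$, is the identity, using the normalized consequences $NC(n)$ and the $\circh \leftrightarrow \circv$ symmetry to keep the computation manageable. Your explicit appeal to the fundamental cycles of a spanning tree as free generators of $\pi(G(n),t)$ is in fact a slightly sharper justification of the reduction to a single cycle basis than the paper's remark about symmetric differences over $\mathbb{F}_2$, but the overall method is identical.
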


%%%%%%%%%%%%%%%%%%%%%%%%%%%%%%%%%%%%%%%%%%%%%%%%%%%%%%%%%%%%%%%%%%%%%%%%

\section{Existence of nontrivial cycles in degree 9} \label{sectiondegree9}

The directed graph $G(9)$ has 41586 vertices and 28004 pairs of edges with 14058 isolated vertices and 
4001 (nontrivial connected) components.
We sort the components first by number of vertices, and then by minimal vertex using the total order on 
association types from 
Definition \ref{definitiontotalorder}.
We find that 3380 components are trees; the other 621 components have 23 different nonzero circuit ranks.
The following table gives the number $c$ of components with circuit rank $r$ (including $r = 0$):
  \[
  \begin{array}{rrrrrrrrrrrrr}
  r &\; 0 &\; 1 &\; 2 &\; 3 &\; 4 &\; 6 &\; 9 &\; 10 &\; 11 &\; 13 &\; 14 &\; 15 \\
  c &\; 3380 &\; 162 &\; 66 &\; 132 &\; 66 &\; 32 &\; 32 &\; 32 &\; 8 &\; 4 &\; 16 &\; 4
  \\ \midrule
  r &\; 19 &\; 23 &\; 26 &\; 30 &\; 31 &\; 33 &\; 38 &\; 48 &\; 53 &\; 54 &\; 55 &\; 75 \\
  c &\; 16 &\;  4 &\; 4 &\; 8 &\; 8 &\; 4 &\; 4 &\; 4 &\; 4 &\; 2 &\; 8 &\; 1
  \end{array}
  \]
Recall that the circuit rank $r$ (the size of any cycle basis) is an invariant of the component.
It is the dimension of the cycle space: the vector space over $\mathbb{F}_2$ 
obtained from any cycle basis by iteration of symmetric difference (addition modulo 2).
But when we compute the product of the permutations along each cycle in a cycle basis, we can obtain different 
numbers of 
nontrivial cycles for different cycle bases.
As a canonical choice of cycle basis for each component, we take the basis computed by the Maple command 
\texttt{CycleBasis}, and convert it to an $r \times e$ matrix over $\mathbb{F}_2$ where $e$ is the number
of edges in the component; the $(i,j)$ entry is 1 if and only if cycle $i$ contains edge $j$.
We then compute the row canonical form (RCF) of the matrix over $\mathbb{F}_2$, and use the cycles represented 
by the rows of the RCF as our cycle basis.
Table \ref{nontrivial9} gives the 16 components which have nontrivial cycles in this cycle basis:
\begin{itemize}
\item[--]
column 1 is the index number of the component in the sorted list
\item[--]
column 2 is the number $v$ of vertices,
column 3 is the number $e$ of edges
\item[--]
column 4 is the circuit rank $r$
\item[--]
column 5 is the number $k$ of nontrivial cycles in the standard cycle basis
\item[--]
column 6 is the index number of the minimal vertex $v_{\min}$
\item[--]
column 7 is the corresponding minimal association type $t_{\min}$
\end{itemize}
Table \ref{nontrivial9} is a complete list, since if a component has a nontrivial cycle, 
then every cycle basis has at least one nontrivial cycle.
(Observe that if every basis cycle is trivial, then it is a consequence of our convention
for labelling edges with permutations that every linear combination over $\mathbb{F}_2$ is also trivial.) \,
Table \ref{nontrivial9} consists of three types of sizes 4, 4, 8 corresponding to circuit ranks 38, 53, 30 
respectively.
We choose a representative vertex in each component whose geometric realization has a particularly symmetric 
and rectangular form when the monomial is expressed as a diagram of boxes using the interpretation of 
the operations as horizontal and vertical compositions.
In each type, the geometric realizations of the components form a single orbit for the action of the
dihedral group $D_8$ (the symmetry group of the square); see the last column in Tables \ref{table9.1}, 
\ref{table9.2} and \ref{table9.3}.

\begin{table}[ht]
\[
\begin{array}{ccccccl}
\text{index} & v & e & r & k & v_{\min} & t_{\min} \\ \midrule
3981 & 107 & 144 & 38 &  2 &  9007 & 
( a \circv b ) \circh ( c \circv d ) \circh ( e \circv ( ( f \circv g \circv h ) \circh i ) )
\\
3982 & 107 & 144 & 38 &  2 &  9040 & 
( a \circv b ) \circh ( c \circv d ) \circh ( ( ( e \circv ( f \circh g ) ) \circh h ) \circv i )
\\
3983 & 107 & 144 & 38 &  2 & 11020 & 
( a \circv b \circv c ) \circh ( d \circv e \circv ( f \circh g \circh h ) \circv i )
\\
3984 & 107 & 144 & 38 &  2 & 11971 & 
( a \circv b \circv c \circv d ) \circh ( e \circv ( f \circh g \circh h ) \circv i )
\\
\midrule
3989 & 113 & 165 & 53 & 10 &  9017 & 
( a \circv b ) \circh ( c \circv d ) \circh ( e \circv ( f \circh g \circh h ) \circv i )
\\
3990 & 113 & 165 & 53 &  4 & 10898 & 
( a \circv b \circv c ) \circh ( d \circv e \circv f \circv g ) \circh ( h \circv i )
\\
3991 & 113 & 165 & 53 &  7 & 10975 &
( a \circv b \circv c ) \circh ( d \circv ( e \circh ( ( f \circh g \circh h ) \circv i ) ) )
\\
3992 & 113 & 165 & 53 &  5 & 11065 & 
( a \circv b \circv c ) \circh ( ( d \circh e ) \circv ( f \circh g \circh h ) \circv i )
\\
\midrule
3994 & 120 & 149 & 30 &  3 &  9481 & 
( a \circv b ) \circh ( c \circv ( d \circh e \circh ( ( f \circh g \circh h ) \circv i ) ) )
\\
3995 & 120 & 149 & 30 &  2 &  9526 & 
( a \circv b ) \circh ( c \circv ( d \circh ( e \circv ( f \circh g \circh h ) \circv i ) ) )
\\
3996 & 120 & 149 & 30 &  2 &  9920 & 
( a \circv b ) \circh ( ( c \circh d ) \circv e \circv ( f \circh g \circh h ) \circv i )
\\
3997 & 120 & 149 & 30 &  2 &  9965 & 
( a \circv b ) \circh ( ( c \circh d \circh e ) \circv ( f \circh g \circh h ) \circv i )
\\
3998 & 120 & 149 & 30 &  2 & 10965 & 
( a \circv b \circv c ) \circh ( d \circv ( e \circh ( f \circv g \circv h ) \circh i ) )
\\
3999 & 120 & 149 & 30 &  2 & 11116 & 
( a \circv b \circv c ) \circh ( ( d \circh ( e \circv ( f \circh g ) ) \circh h ) \circv i )
\\
4000 & 120 & 149 & 30 &  2 & 11961 & 
( a \circv b \circv c \circv d ) \circh ( e \circv ( ( f \circv g \circv h ) \circh i ) )
\\
4001 & 120 & 149 & 30 &  2 & 11994 & 
( a \circv b \circv c \circv d ) \circh ( ( ( e \circv ( f \circh g ) ) \circh h ) \circv i )
\\
\midrule
\end{array}
\]
\caption{Representative vertices of the 16 components for $n = 9$ which have nontrivial cycles,
partitioned according to circuit rank.}
\label{nontrivial9}
\end{table}

\subsection*{Type 1: components 3981 to 3984}

We represent these components by the vertices in Table \ref{table9.1}, expressed as association types and 
geometric realizations (diagrams); their index numbers in the total order on vertices are 
31182, 34464, 11028, 13963 respectively.
In the diagrams we use row vectors to denote horizontal composition ($\circh$)
and column vectors to denote vertical composition ($\circv$).
Each diagram has a horizontal or vertical symmetry, 
so there are four components in this orbit for the action of $D_8$.

\begin{table}[ht]
\begin{tabular}{ccc}
index &\; association type &\; diagram
\\ 
\\
3981 
&\;
$( a \circh b \circh c ) \circv ( d \circh ( e \circv f ) \circh ( g \circv h ) \circh i )$
&\;
$
\begin{bmatrix}
\begin{bmatrix} a & b & c \end{bmatrix}
\\
\\
\begin{bmatrix}
d 
&
\begin{bmatrix} e \\ f \end{bmatrix}
&
\begin{bmatrix} g \\ h \end{bmatrix}
&
i
\end{bmatrix}
\end{bmatrix}
$
\\
\\
3982 
&\;
$( a \circh ( b \circv c ) \circh ( d \circv e ) \circh f ) \circv ( g \circh h \circh i )$
&\;
$
\begin{bmatrix}
\begin{bmatrix}
a &
\begin{bmatrix} b \\ c \end{bmatrix}
&
\begin{bmatrix} d \\ e \end{bmatrix}
&
f
\end{bmatrix}
\\
\\
\begin{bmatrix} g & h & i \end{bmatrix}
\end{bmatrix}
$
\\
\\
3983 
&\;
$( a \circv b \circv c ) \circh ( d \circv ( e \circh f ) \circv ( g \circh h ) \circv i )$
&\;
$
\begin{bmatrix}
\begin{bmatrix} a \\ b \\ c \end{bmatrix}
&
\begin{bmatrix}
d \\
\begin{bmatrix} e & f \end{bmatrix} \\
\begin{bmatrix} g & h \end{bmatrix} \\
i
\end{bmatrix}
\end{bmatrix}
$
\\
\\
3984 
&\;
$( a \circv ( b \circh c ) \circv ( d \circh e ) \circv f ) \circh ( g \circv h \circv i )$
&\;
$
\begin{bmatrix}
\begin{bmatrix}
a \\
\begin{bmatrix} b & c \end{bmatrix} \\
\begin{bmatrix} d & e \end{bmatrix} \\
f
\end{bmatrix}
&
\begin{bmatrix} g \\ h \\ i \end{bmatrix}
\end{bmatrix}
$
\end{tabular}
\bigskip
\caption{Representative vertices for components of type 1}
\label{table9.1}
\end{table}

\begin{theorem} \label{theorem3981}
For component 3981, the nontrivial cycles in the cycle basis produce the following commutativity property
which transposes the indeterminates $\{ e, g \}$.
Every other association type in component 3981 produces a corresponding property;
since the association types in the same component represent the same equivalence class of monomials in 
the free double semigroup, starting from another association type amounts to choosing a different representative 
of the same equivalence class:
\begin{align*}
( a \circh b \circh c ) \circv ( d \circh ( e \circv f ) \circh ( g \circv h ) \circh i )
&\equiv
( a \circh b \circh c ) \circv ( d \circh ( g \circv f ) \circh ( e \circv h ) \circh i )
\\
\begin{bmatrix}
\begin{bmatrix} a & b & c \end{bmatrix}
\\
\begin{bmatrix}
d &
\begin{bmatrix} e \\ f \end{bmatrix}
&
\begin{bmatrix} g \\ h \end{bmatrix}
&
i
\end{bmatrix}
\end{bmatrix}
&\equiv
\begin{bmatrix}
\begin{bmatrix} a & b & c \end{bmatrix}
\\
\begin{bmatrix}
d &
\begin{bmatrix} g \\ f \end{bmatrix}
&
\begin{bmatrix} e \\ h \end{bmatrix}
&
i
\end{bmatrix}
\end{bmatrix}
\end{align*}
Cyclic rotation produces bijections between the four components of type 1 and hence 
there are corresponding identities in all components.
\end{theorem}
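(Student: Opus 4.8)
The plan is to prove the identity by converting the nontrivial cycle of component $3981$ into an explicit finite chain of equalities, each step being either associativity or a single application of the interchange relation \eqref{interchange}. Basing the cycle at the representative association type of Table \ref{table9.1}, I would follow the closed path in $G(9)$ whose product of edge permutations---read on \emph{positions}, as required by Definition \ref{definitionnontrivial}---is the transposition of the two positions occupied by $e$ and $g$. By the unique path lifting property of the covering $p\colon F(9)\to G(9)$, this closed path lifts to a \emph{non}-closed path in $F(9)$ whose two endpoints are precisely the multilinear monomials on the two sides of the asserted identity. Writing out each edge as an algebraic consequence of \eqref{interchange}, exactly as in Examples \ref{exampledegree5} and \ref{degree7example}, and concatenating the resulting equations, yields the self-contained chain from the left side to the right side.

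Geometrically this is a generalized Eckmann--Hilton manoeuvre. The two inner columns $(e\circv f)$ and $(g\circv h)$ sit side by side in the lower row, and the border boxes $a,b,c$ above together with $d$ and $i$ on the sides supply the room to manoeuvre that a common unit supplies in the classical argument \cite{EckmannHilton1961}. The basic local step is \eqref{interchange} applied to these adjacent columns, $(e\circv f)\circh(g\circv h)\equiv(e\circh g)\circv(f\circh h)$, which trades two vertical pairs for two horizontal pairs. I emphasize that no single such move swaps $e$ and $g$---the ambient operations are \emph{not} assumed commutative---rather the transposition is the net effect of traversing the entire cycle, during which associativity regroupings let the border boxes absorb and later release the inner elements. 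I would record every intermediate monomial as a box diagram so that a reader can verify, step by step, that only \eqref{interchange} and associativity are used and that the cumulative permutation of positions is exactly the transposition $(e\,g)$.

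For the closing assertion about the four components of type $1$, the plan is to transport the identity by symmetry rather than to repeat the chain. Both associativity and \eqref{interchange} are preserved under transposing the two operations $\circh\leftrightarrow\circv$ (under which \eqref{interchange} is visibly self-dual) and under reversing the argument order of either operation (a one-line check). A clockwise quarter-turn of a box diagram interchanges rows with columns and reverses one axis, so it corresponds to the composite of the operation-transpose with one such reversal; consequently it preserves the defining axioms and therefore carries any valid double-semigroup identity to a valid one. As recorded in Table \ref{table9.1}, the four type-$1$ diagrams form a single orbit under this quarter-turn, each diagram having an order-$2$ stabilizer given by its own horizontal or vertical reflection, so the cyclic group of rotations acts freely and transitively on the four. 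Applying the quarter-turn repeatedly to the identity for component $3981$ then produces the corresponding identities for components $3982$, $3983$ and $3984$.

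The main obstacle I anticipate is the explicit chain itself. With circuit rank $38$, component $3981$ supports a long cycle, and two points need care. First, the passage between permutations of positions and permutations of subscripts (Definitions \ref{definitionmultilinearmonomials} and \ref{definitionforgetfulmap}) must be handled consistently when composing consecutive edges, since the target permutation $(e\,g)$ is to be read on positions. Second, associativity rewrites leave the vertex of $G(9)$ unchanged yet are indispensable for realizing the edges concretely---the same subtlety seen in the $2\times 3$ discussion around Figure \ref{degree6component}---so one must check that the chosen bracketings never block the next application of \eqref{interchange}. Once a valid chain is pinned down, confirming that its composite permutation is precisely $(e\,g)$ is a routine finite check.
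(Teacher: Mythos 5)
Your overall strategy is exactly the paper's: realize the nontrivial cycle as an explicit finite chain of applications of \eqref{interchange} (with associativity used only to regroup), check that the net permutation of positions is the transposition of the slots holding $e$ and $g$, and then transport the identity to the other three components of type 1 by the dihedral symmetry that permutes the diagrams of Table \ref{table9.1}. The symmetry part of your argument is sound and matches the paper's one-line justification.

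However, there is a genuine gap: you never exhibit the chain. The entire mathematical content of the paper's proof is Figure \ref{proof3981}, a concrete sequence of sixteen applications of \eqref{interchange}, each specified by its four factors $w,x,y,z$, leading from $( a \circh b \circh c ) \circv ( d \circh ( e \circv f ) \circh ( g \circv h ) \circh i )$ to $( a \circh b \circh c ) \circv ( d \circh ( g \circv f ) \circh ( e \circv h ) \circh i )$. Your proposal instead appeals to ``the closed path in $G(9)$ whose product of edge permutations is the transposition'' --- but the existence of such a path is precisely what a self-contained proof must demonstrate; as it stands you are deferring to the computer-algebra detection of the nontrivial cycle rather than verifying it. You correctly identify this as ``the main obstacle,'' but anticipating the obstacle is not the same as overcoming it: with circuit rank $38$ and $107$ vertices in the component, producing a valid chain whose bracketings never block the next move is the nontrivial part, and until it is written down the proof is incomplete. (Your heuristic description of the mechanism --- the border boxes $a,b,c,d,i$ providing Eckmann--Hilton-style room to manoeuvre while the inner columns are dismantled and reassembled --- is an accurate gloss on what the paper's sixteen steps actually do, but it is a gloss, not a derivation.)
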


\begin{proof}
We use algebraic notation without geometric diagrams.
Applying the interchange relation to the factors $w, x, y, z$ means either
replacing $(w \circv x) \circh (y \circv z)$ by $(w \circh y) \circv (x \circh z)$ or the reverse;
this will be clear from the context.
To reduce the number of steps, we omit parentheses when associativity implies that this can be done 
without ambiguity.
At each step it suffices to specify the factors $w, x, y, z$ to which we apply the interchange relation, 
together with the result.
Figure \ref{proof3981} starts with the monomial on the left side of the relation in Theorem \ref{theorem3981},
and ends with the monomial on the right side.
\end{proof}

  \begin{figure}[ht]
  \begin{alignat*}{2}
  &
  \text{factors $w, x, y, z$}
  &\qquad\qquad
  &
  \text{result of interchange}
  \\
  &
  &\qquad\qquad
  &
  ( a \circh b \circh c ) \circv ( d \circh ( e \circv f ) \circh ( g \circv h ) \circh i )
  \\
  &
  a \circh b, \; c, \; d \circh ( e \circv f ) \circh ( g \circv h ), \; i
  &\qquad\qquad
  &
  ( ( a \circh b ) \circv ( d \circh ( e \circv f ) \circh ( g \circv h ) ) ) \circh ( c \circv i )
  \\
  &
  a, \; b, \; d \circh ( e \circv f ), \; g \circv h
  &\qquad\qquad
  &
  ( a \circv ( d \circh ( e \circv f ) ) ) \circh ( b \circv g \circv h ) \circh ( c \circv i )
  \\
  &
  a, \; d \circh ( e \circv f ), \; b \circv g, \; h
  &\qquad\qquad
  &
  ( ( a \circh ( b \circv g ) ) \circv ( d \circh ( e \circv f ) \circh h ) ) \circh ( c \circv i )
  \\
  &
  a \circh ( b \circv g ), \; d \circh ( e \circv f ) \circh h, \; c, \; i
  &\qquad\qquad
  &
  ( a \circh ( b \circv g ) \circh c ) \circv ( d \circh ( e \circv f ) \circh h \circh i )
  \\
  &
  a \circh ( b \circv g ) , \; c, \; d \circh ( e \circv f ), \; h \circh i
  &\qquad\qquad
  &
  ( ( a \circh ( b \circv g ) ) \circv ( d \circh ( e \circv f ) ) ) \circh ( c \circv ( h \circh i ) )
  \\
  &
  a, \; b \circv g, \; d, \; e \circv f
  &\qquad\qquad
  &
  ( a \circv d ) \circh ( ( b \circv g \circv e \circv f ) \circh ( c \circv ( h \circh i ) ) )
  \\
  &
  a, \; d, \; b \circv g \circv e, \; f
  &\qquad\qquad
  &
  ( ( a \circh ( b \circv g \circv e ) ) \circv ( d \circh f ) ) \circh ( c \circv ( h \circh i ) )
  \\
  &
  a \circh ( b \circv g \circv e ), \; d \circh f, \; c, \; h \circh i
  &\qquad\qquad
  &
  ( a \circh ( b \circv g \circv e ) \circh c ) \circv ( d \circh f \circh h \circh i )
  \\
  &
  a \circh ( b \circv g \circv e ), \; c, \; d \circh f \circh h, \; i
  &\qquad\qquad
  &
  ( ( a \circh ( b \circv g \circv e ) ) \circv ( d \circh f \circh h ) ) \circh ( c \circv i )
  \\
  &
  a, \; b \circv g \circv e, \; d \circh f, \; h
  &\qquad\qquad
  &
  ( a \circv ( d \circh f ) ) \circh ( b \circv g \circv e \circv h ) \circh ( c \circv i )
  \\
  &
  a, \; d \circh f, \; b \circv g, \; e \circv h
  &\qquad\qquad
  &
  ( ( a \circh ( b \circv g ) ) \circv ( d \circh f \circh ( e \circv h ) ) ) \circh ( c \circv i )
  \\
  &
  a \circh ( b \circv g ), \; d \circh f \circh ( e \circv h ), \; c, \; i
  &\qquad\qquad
  &
  ( a \circh ( b \circv g ) \circh c ) \circv ( d \circh f \circh ( e \circv h ) \circh i )
  \\
  &
  a \circh ( b \circv g ), \; c, \; d \circh f, \; ( e \circv h ) \circh i
  &\qquad\qquad
  &
  ( ( a \circh ( b \circv g ) ) \circv ( d \circh f ) ) \circh ( c \circv ( ( e \circv h ) \circh i ) )
  \\
  &
  a, \; b \circv g, \; d, \; f
  &\qquad\qquad
  &
  ( a \circv d ) \circh ( b \circv g \circv f ) \circh ( c \circv ( ( e \circv h ) \circh i ) )
  \\
  &
  b, \; g \circv f, \; c, \; ( e \circv h ) \circh i
  &\qquad\qquad
  &
  ( a \circv d ) \circh ( ( b \circh c ) \circv ( ( g \circv f ) \circh ( e \circv h ) \circh i ) )
  \\
  &
  a, \; d, \; b \circh c, \; ( g \circv f ) \circh ( e \circv h ) \circh i
  &\qquad\qquad
  &
  ( a \circh b \circh c ) \circv ( d \circh ( g \circv f ) \circh ( e \circv h ) \circh i )
  \end{alignat*}
  \vspace{-7mm}
  \caption{Proof of Theorem \ref{theorem3981} for component 3981}
  \label{proof3981}
  \end{figure}

\begin{table}[ht]
\begin{tabular}{ccc}
index &\; association type &\; diagram
\\
\\
3989 
&\; 
$( a \circv b ) \circh ( c \circv d \circv e \circv f ) \circh ( g \circv h \circv i )$
&\;
$
\begin{bmatrix}
\begin{bmatrix} a \\ b \end{bmatrix}
&
\begin{bmatrix} c \\ d \\ e \\ f \end{bmatrix}
&
\begin{bmatrix} g \\ h \\ i \end{bmatrix}
\end{bmatrix}
$
\\
\\
3990 
&\; 
$( a \circv b \circv c ) \circh ( d \circv e \circv f \circv g ) \circh ( h \circv i )$
&\;
$
\begin{bmatrix}
\begin{bmatrix} a \\ b \\ c \end{bmatrix}
&
\begin{bmatrix} d \\ e \\ f \\ g \end{bmatrix}
&
\begin{bmatrix} h \\ i \end{bmatrix}
\end{bmatrix}
$
\\
\\
3991 
&\; 
$( a \circh b ) \circv ( c \circh d \circh e \circh f ) \circv ( g \circh h \circh i )$
&\;
$
\begin{bmatrix}
\begin{bmatrix} a & b \end{bmatrix}
\\
\begin{bmatrix} c & d & e & f \end{bmatrix}
\\
\begin{bmatrix} g & h & i \end{bmatrix}
\end{bmatrix}
$
\\
\\
3992 
&\; 
$( a \circh b \circh c ) \circv ( d \circh e \circh f \circh g ) \circv ( h \circh i )$
&\;
$
\begin{bmatrix}
\begin{bmatrix} a & b & c \end{bmatrix}
\\
\begin{bmatrix} d & e & f & g \end{bmatrix}
\\
\begin{bmatrix} h & i \end{bmatrix}
\end{bmatrix}
$
\end{tabular}
\bigskip
\caption{Representative vertices for components of type 2}
\label{table9.2}
\end{table}

\subsection*{Type 2: components 3989 to 3992}

We represent these components by the vertices in Table \ref{table9.2} which have index numbers 
9137, 10898, 30805, 31485 respectively.
Each diagram has a reflection symmetry, so there are four components in this orbit for the action of $D_8$.
Component 3989 has 10 nontrivial basis cycles but none of them contains the representative vertex 9137.
However, there is a directed edge $e$ from vertex 9137 to vertex 14245,
and vertex 14245 belongs to every nontrivial cycle.
We therefore construct nontrivial cycles starting and ending at vertex 9137 by following $e$, 
then the original cycle, and then $e^{-1}$ back to vertex 9137.
(This procedure is admittedly rather artificial and unnatural, and is done for computational convenience
rather than any conceptual reason.)

\begin{theorem} \label{theorem3989}
For component 3989, the nontrivial cycles in the cycle basis produce the following commutativity property,
which transposes the indeterminates $\{ d, e \}$:
\begin{align*}
( a \circv b ) \circh ( c \circv d \circv e \circv f ) \circh ( g \circv h \circv i )
&\equiv
( a \circv b ) \circh ( c \circv e \circv d \circv f ) \circh ( g \circv h \circv i )
\\
\begin{bmatrix}
\begin{bmatrix} a \\ b \end{bmatrix}
&
\begin{bmatrix} c \\ d \\ e \\ f \end{bmatrix}
&
\begin{bmatrix} g \\ h \\ i \end{bmatrix}
\end{bmatrix}
&\equiv
\begin{bmatrix}
\begin{bmatrix} a \\ b \end{bmatrix}
&
\begin{bmatrix} c \\ e \\ d \\ f \end{bmatrix}
&
\begin{bmatrix} g \\ h \\ i \end{bmatrix}
\end{bmatrix}
\end{align*}
Cyclic rotation produces bijections between the four components of type 2
and hence there are corresponding identities in all components.
\end{theorem}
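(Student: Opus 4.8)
The plan is to mirror the proof of Theorem~\ref{theorem3981}: I would exhibit an explicit sequence of applications of the interchange relation carrying the left-hand monomial $( a \circv b ) \circh ( c \circv d \circv e \circv f ) \circh ( g \circv h \circv i )$ to the right-hand monomial with $d$ and $e$ transposed, recorded in a two-column figure of the same form as Figure~\ref{proof3981}, whose left column names the four factors $w,x,y,z$ to which \eqref{interchange} is applied at each step and whose right column gives the resulting monomial. As in the earlier proof, I would use associativity of $\circh$ and $\circv$ silently to regroup and to suppress parentheses, so that each line requires only the verification of a single interchange move. The conceptual content is a generalized Eckmann--Hilton argument: the two indeterminates $d,e$ sit in the interior of the central vertical stack $( c \circv d \circv e \circv f )$, and the surrounding material---the outer columns $( a \circv b )$ and $( g \circv h \circv i )$ together with the flanking entries $c$ and $f$---plays the role that the units play in the classical Eckmann--Hilton argument, supplying the room needed to slide $e$ past $d$.

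Concretely, I would proceed in four stages. First, using associativity of $\circv$ to expose $d \circv e$ as an accessible factor, I would apply interchange moves that couple the central stack with a horizontal neighbour and assemble a $2 \times 2$ subconfiguration in which one of $d,e$ becomes the interchangeable corner. Second, a single interchange rotates that block and begins to exchange the vertical order of $d$ and $e$, at the cost of temporarily displacing $c$, $f$, and entries of the outer columns. Third, a symmetric sequence of moves completes the exchange, leaving $e$ above $d$. Fourth, I would reverse the setup moves to reassemble the original association type, now with $d$ and $e$ interchanged and every other entry restored to its starting slot.

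Alternatively, following the remark preceding the theorem, I would build the required closed path directly in $G(9)$: since vertex $9137$ lies on no nontrivial basis cycle but the edge joining it to vertex $14245$ is available, and $14245$ lies on every nontrivial cycle, I would traverse that edge, then a nontrivial cycle at $14245$, then the reverse edge back to $9137$, and read off the resulting permutation. Either route yields the same identity; the algebraic route produces the self-contained proof, while the graph-theoretic route certifies that the net permutation is nontrivial.

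The main obstacle is bookkeeping rather than ingenuity. Unlike Kock's degree-$16$ identity, where a full border of squares surrounds the transposed pair, here the available room is minimal: $d$ and $e$ are landlocked, with only the single entries $c$ and $f$ and the two outer columns to work with. The hard part is to orchestrate the interchange moves so that \emph{every} auxiliary entry $a,b,c,f,g,h,i$ returns to exactly its original position while $d$ and $e$ are exchanged, and then to verify that the composite permutation along the whole sequence is precisely the transposition of $d$ and $e$ and the identity on everything else. This is a finite, mechanical check, but it is the step most prone to error and the one that the cycle-basis computation underlying Table~\ref{nontrivial9} is designed to guarantee.
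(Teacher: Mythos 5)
You have correctly identified the paper's strategy: the proof of Theorem~\ref{theorem3989} is, exactly as you propose, a two-column figure in the style of Figure~\ref{proof3981} listing the factors $w,x,y,z$ at each step and the resulting monomial, with associativity used silently (this is Figure~\ref{proof3989} in the paper, a sequence of thirteen applications of \eqref{interchange}). The problem is that identifying the strategy is not the proof. The entire mathematical content here \emph{is} the explicit sequence of interchange moves, and you never exhibit it; you only assert that one exists and describe, in qualitative terms, a four-stage plan for finding it. Your own closing paragraph concedes that the orchestration of the moves so that $a,b,c,f,g,h,i$ all return to their original positions is ``the step most prone to error'' --- but that step is precisely what a proof of this theorem consists of, and it is the step you have omitted.

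Moreover, the four-stage outline you sketch (setup moves exposing $d \circv e$, a single interchange that ``rotates the block,'' a symmetric completion, then reversal of the setup) does not match the structure of any derivation I can verify, and in particular does not match the paper's. The actual derivation is not of the form ``conjugate a single transposing move by a palindromic setup'': it opens by interchanging the factors $a,\,b,\,c\circv d,\,e\circv f$ to merge the first two columns, then works $e$ and $h$ into a horizontal pair, and only near the end forms the pair $c \circv e$ via an interchange on the factors $c,\,g,\,e,\,h$; the intermediate monomials wander through association types quite far from the starting one. So the outline cannot be checked and, as stated, is not obviously realizable. Your alternative route through $G(9)$ --- traversing the edge from vertex $9137$ to $14245$, a nontrivial basis cycle, and back --- is the computation the paper uses to \emph{discover} the identity, but it presupposes the machine-computed cycle basis and so does not deliver the self-contained proof the theorem is meant to have. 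To complete the argument you must actually write down the sequence of interchange applications and verify that its composite effect is the transposition of $d$ and $e$.
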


\begin{proof}
We follow the same method as for Theorem \ref{theorem3981}, starting in Figure \ref{proof3989} 
with the monomial on the left side and ending with the monomial on the right side.
\end{proof}

  \begin{figure}[ht]
  \begin{alignat*}{2}
  &
  \text{factors $w, x, y, z$}
  &\qquad\qquad
  &
  \text{result of interchange}
  \\
  &
  &\qquad\qquad
  &
  ( a \circv b ) \circh ( c \circv d \circv e \circv f ) \circh ( g \circv h \circv i )
  \\
  &
  a, \; b, \; c \circv d, \; e \circv f
  &\qquad\qquad
  &
  ( ( a \circh ( c \circv d ) ) \circv ( b \circh ( e \circv f ) ) ) \circh ( g \circv h \circv i )
  \\
  &
  a \circh ( c \circv d ), \; b \circh ( e \circv f ), \; g, \; h \circv i
  &\qquad\qquad
  &
  ( a \circh ( c \circv d ) \circh g ) \circv ( b \circh ( e \circv f ) \circh ( h \circv i ) )
  \\
  &
  e, \; f, \; h, \; i
  &\qquad\qquad
  &
  ( a \circh ( c \circv d ) \circh g ) \circv ( b \circh ( ( e \circh h ) \circv ( f \circh i ) ) )
  \\
  &
  a \circh ( c \circv d ), \; g, \; b, \; ( e \circh h ) \circv ( f \circh i )
  &\qquad\qquad
  &
  ( ( a \circh ( c \circv d ) ) \circv b ) \circh ( g \circv ( e \circh h ) \circv ( f \circh i ) )
  \\
  &
  a \circh ( c \circv d ), \; b, \; g \circv ( e \circh h ), \; f \circh i
  &\qquad\qquad
  &
  ( a \circh ( c \circv d ) \circh ( g \circv ( e \circh h ) ) ) \circv ( b \circh f \circh i )
  \\
  &
  a \circh ( c \circv d ), \; g \circv ( e \circh h ), \; b \circh f, \; i
  &\qquad\qquad
  &
  ( ( a \circh ( c \circv d ) ) \circv ( b \circh f ) ) \circh ( g \circv ( e \circh h ) \circv i )
  \\
  &
  a, \; c \circv d, \; b, \; f
  &\qquad\qquad
  &
  ( a \circv b ) \circh ( c \circv d \circv f ) \circh ( g \circv ( e \circh h ) \circv i )
  \\
  &
  c, \; d \circv f, \; g, \; ( e \circh h ) \circv i
  &\qquad\qquad
  &
  ( a \circv b ) \circh ( ( c \circh g ) \circv ( ( d \circv f ) \circh ( ( e \circh h ) \circv i ) ) )
  \\
  &
  a, \; b, \; c \circh g, \; ( d \circv f ) \circh ( ( e \circh h ) \circv i )
  &\qquad\qquad
  &
  ( a \circh c \circh g ) \circv ( b \circh ( d \circv f ) \circh ( ( e \circh h ) \circv i ) )
  \\
  &
  a, \; c \circh g, \; b \circh ( d \circv f ), \; ( e \circh h ) \circv i
  &\qquad\qquad
  &
  ( a \circv ( b \circh ( d \circv f ) ) ) \circh ( ( c \circh g ) \circv ( e \circh h ) \circv i )
  \\
  &
  a, \; b \circh ( d \circv f ), \; ( c \circh g ) \circv ( e \circh h ), \; i
  &\qquad\qquad
  &
  ( a \circh ( ( c \circh g ) \circv ( e \circh h ) ) ) \circv ( b \circh ( d \circv f ) \circh i )
  \\
  &
  c, \; g, \; e, \; h
  &\qquad\qquad
  &
  ( a \circh ( c \circv e ) \circh ( g \circv h ) ) \circv ( b \circh ( d \circv f ) \circh i )
  \\
  &
  a \circh ( c \circv e ), \; g \circv h, \; b \circh ( d \circv f ), \; i
  &\qquad\qquad
  &
  ( ( a \circh ( c \circv e ) ) \circv ( b \circh ( d \circv f ) ) ) \circh ( g \circv h \circv i )
  \\
  &
  a, \; c \circv e, \; b, \; d \circv f
  &\qquad\qquad
  &
  ( a \circv b ) \circh ( c \circv e \circv d \circv f ) \circh ( g \circv h \circv i )
  \end{alignat*}
  \vspace{-7mm}
  \caption{Proof of Theorem \ref{theorem3989} for component 3989}
  \label{proof3989}
  \end{figure}

\subsection*{Type 3: components 3994 to 4001}

We represent these components by the vertices in Table \ref{table9.3}; their index numbers in 
the total order are 9559, 29656, 36110, 10265, 14904, 16316, 30165, 37128 respectively.
None of the diagrams has any symmetry, so there are eight components in this orbit.
Component 3994 has 3 nontrivial basis cycles but none of them contains the representative vertex 9559.
However, there is a directed path $e_1 e_2$ from 9559 to 9569 to 9771, 
and vertex 9771 belongs to all three cycles.
We therefore construct (again somewhat artificially) nontrivial cycles starting and ending at 9559 
by following $e_1 e_2$, then the cycle, and then returning to 9559 by $e_2^{-1} e_1^{-1}$.

\begin{table}[ht]
\begin{tabular}{ccc}
index & association type & diagram
\\
\\
3994 &  
$( a \circv b ) \circh ( c \circv ( ( d \circv e ) \circh ( f \circv g ) \circh( h \circv i ) ) $ &
$
\begin{bmatrix}
\begin{bmatrix} a \\ b \end{bmatrix}

\begin{bmatrix}
&  c & \\
\begin{bmatrix} d \\ e \end{bmatrix} &  
\begin{bmatrix} f \\ g \end{bmatrix} &  
\begin{bmatrix} h \\ i
\end{bmatrix}
\end{bmatrix}
\end{bmatrix}
$
\\
\\
3995 & 
$( a \circh b ) \circv ( c \circh (  ( d \circh e ) \circv ( f \circh g ) \circv ( h \circh i ) ) ) $ &
$
\begin{bmatrix}
\begin{bmatrix} a &  b \end{bmatrix} \\
\begin{bmatrix}
& \begin{bmatrix} d &  e \end{bmatrix} \\
c  & \begin{bmatrix} f &  g \end{bmatrix} \\
& \begin{bmatrix} h &  i \end{bmatrix}
\end{bmatrix}
\end{bmatrix}
$
\\
\\
3996 & 
$( a \circh ( ( b \circh c ) \circv ( d \circh e ) \circv ( f \circh g ) ) ) \circv ( h \circh i ) $ &
$
\begin{bmatrix}
\begin{bmatrix}
&\begin{bmatrix} b & c \end{bmatrix}\\
a  
&\begin{bmatrix} d &  e \end{bmatrix}\\
&\begin{bmatrix} f &  g \end{bmatrix}
\end{bmatrix}
\\
\begin{bmatrix} h &  i \end{bmatrix}
\end{bmatrix}
$
\\
\\
3997 & 
$( a \circv b ) \circh ( ( ( c \circv d ) \circh ( e \circv f ) \circh ( g \circv h ) ) \circv i ) $ &
$
\begin{bmatrix}
\begin{bmatrix} a \\ b \end{bmatrix}
& 
\begin{bmatrix}
\begin{bmatrix} c \\ d \end{bmatrix} & 
\begin{bmatrix} e \\ f \end{bmatrix} & 
\begin{bmatrix} g \\ h 
\end{bmatrix} \\
& i &
\end{bmatrix}
\end{bmatrix}
$
\\
\\
3998 & 
$( a \circv (  ( b \circv c ) \circh ( d \circv e ) \circh ( f \circv g )  )  ) \circh ( h \circv i ) $ &
$
\begin{bmatrix}
\begin{bmatrix}
&  a & \\
\begin{bmatrix} b \\ c \end{bmatrix} & 
\begin{bmatrix} d \\ e \end{bmatrix} &  
\begin{bmatrix} f \\ g \end{bmatrix}
\end{bmatrix}
& 
\begin{bmatrix} h &  i \end{bmatrix}
\end{bmatrix}
$
\\
\\
3999 & 
$( ( ( a \circv b ) \circh ( c \circv d ) \circh ( e \circv f ) ) \circv g ) \circh ( h \circv i )$ &
$
\begin{bmatrix}
\begin{bmatrix}
\begin{bmatrix} a \\ b \end{bmatrix} &  
\begin{bmatrix} c \\ d \end{bmatrix} &  
\begin{bmatrix} e \\ f 
\end{bmatrix}
\\
& g &
\end{bmatrix}
&
\begin{bmatrix} h \\ i \end{bmatrix}
\end{bmatrix}
$
\\
\\
4000 & 
$ ( a \circh b ) \circv (  (  ( c \circh d ) \circv ( e \circh f ) \circv ( g \circh h )  ) \circh i ) $ &
$
\begin{bmatrix}
\begin{bmatrix} a &  b \end{bmatrix} \\
\begin{bmatrix}
\begin{bmatrix} c &  d \end{bmatrix} & \\ 
\begin{bmatrix} e &  f \end{bmatrix} &  i \\ 
\begin{bmatrix} g &  h \end{bmatrix} &
\end{bmatrix}
\end{bmatrix}
$
\\
\\
4001 & 
$( ( ( a \circh b ) \circv ( ( c \circh d ) \circv ( e \circh f ) ) ) \circh g ) \circv ( h \circh i )$ &
$
\begin{bmatrix}
\begin{bmatrix}
\begin{bmatrix} a &  b \end{bmatrix} & \\
\begin{bmatrix} c &  d \end{bmatrix} &  g \\
\begin{bmatrix} e &  f \end{bmatrix} & \\
\end{bmatrix}
\\
\begin{bmatrix} h &  i \end{bmatrix}
\end{bmatrix}
$
\end{tabular}
\bigskip
\caption{Representative vertices for components of type 3}
\label{table9.3}
\end{table}

\begin{theorem} \label{theorem3994}
For component 3994, the nontrivial cycles in the cycle basis produce the following commutativity property,
which transposes the indeterminates $\{ d, f \}$:
\begin{align*}
( a \circv b ) \circh ( c \circv ( ( d \circv e ) \circh ( f \circv g ) \circh ( h \circv i ) ) )
&\equiv
( a \circv b ) \circh ( c \circv ( ( f \circv e ) \circh ( d \circv g ) \circh ( h \circv i ) ) )
\\
\begin{bmatrix}
\begin{bmatrix} a \\ b \end{bmatrix}
&
\begin{bmatrix}
& c &
\\
\begin{bmatrix} d \\ e \end{bmatrix}
&
\begin{bmatrix} f \\ g \end{bmatrix}
&
\begin{bmatrix} h \\ i \end{bmatrix}
\end{bmatrix}
\end{bmatrix}
&\equiv
\begin{bmatrix}
\begin{bmatrix} a \\ b \end{bmatrix}
&
\begin{bmatrix}
& c &
\\
\begin{bmatrix} f \\ e \end{bmatrix}
&
\begin{bmatrix} d \\ g \end{bmatrix}
&
\begin{bmatrix} h \\ i \end{bmatrix}
\end{bmatrix}
\end{bmatrix}
\end{align*}
The action of the dihedral group $D_8$ induces bijections between the components in type 3,
and so all eight components have corresponding identities.
\end{theorem}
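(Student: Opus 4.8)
The plan is to follow exactly the template used for Theorems~\ref{theorem3981} and \ref{theorem3989}: I will argue purely algebraically, exhibiting an explicit chain of applications of the interchange relation in which each step names the four factors $w, x, y, z$ and records the resulting monomial, beginning from the left side of Theorem~\ref{theorem3994} and ending with its right side. The computer calculation already certifies that component 3994 contains a nontrivial cycle whose edge-permutation product transposes $d$ and $f$; what remains for a self-contained proof is to display one concrete lift of that cycle to a path in $F(9)$. Because the representative vertex 9559 lies on no basis cycle, I follow the auxiliary path $e_1 e_2$ from 9559 through 9569 to 9771, traverse the nontrivial cycle based at 9771, and return along $e_2^{-1} e_1^{-1}$, exactly as anticipated in the discussion preceding the theorem; the concatenation is a closed walk at 9559 whose lift to $F(9)$ realizes the stated identity.

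Conceptually the argument is a generalized Eckmann--Hilton manoeuvre, with the boundary squares playing the role of the units in the classical case. In the geometric realization the column $a \circv b$, the cap $c$, and the column $h \circv i$ supply the room to manoeuvre, while $d, e, f, g$ occupy the interior $2 \times 2$ sub-block across which $d$ and $f$ must be exchanged. The essential point, stressed throughout the paper, is that no single interchange application transposes two variables: each move is an innocuous legal rewrite, and the transposition of $d$ and $f$ emerges only as the global monodromy of traversing the whole closed cycle. Accordingly, the plan is to shuttle the interior block back and forth between the $a \circv b$ column and the $c$, $h \circv i$ boundary --- repeatedly turning horizontal-of-vertical configurations into vertical-of-horizontal ones and back --- so that when the association type finally returns to its original shape the positions occupied by $d$ and $f$ have been swapped.

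The hard part is finding and verifying the exact routing, rather than any individual step. Every intermediate monomial must be a legal vertex of $F(9)$: the operations $\circh, \circv$ must alternate along each root-to-leaf path and all brackets must stay balanced, which tightly constrains the order in which moves may be applied and forces repeated re-bracketing of associative products, as in Remark~\ref{remarkunique}. The accompanying bookkeeping is to confirm that the net effect on the nine positions is precisely the transposition of $d$ and $f$ and nothing more --- equivalently, that the closed cycle in $G(9)$ fails to lift to a closed path in $F(9)$ by exactly this transposition. I would record the entire chain in a figure of the same form as Figures~\ref{proof3981} and \ref{proof3989}, and then read off its net effect on the leaves.

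Finally, once the identity for component 3994 is established the remaining seven components of type 3 need no separate argument: the action of the dihedral group $D_8$ on the geometric realizations carries 3994 to each of 3995--4001 and transports both the chain of moves and its net transposition accordingly, so the corresponding identities follow by symmetry.
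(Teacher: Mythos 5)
Your plan is exactly the paper's plan --- the paper proves Theorem~\ref{theorem3994} by the same method as Theorems~\ref{theorem3981} and \ref{theorem3989}, namely by displaying an explicit chain of applications of the interchange relation (nineteen of them, in Figure~\ref{proof3994}), each step specifying the four factors $w,x,y,z$ and the resulting monomial, starting from the left side of the identity and ending at the right side. Your framing of the problem is also accurate: the computer search only certifies existence of a nontrivial cycle, the representative vertex 9559 must be reached via the auxiliary path through 9569 and 9771 because it lies on no basis cycle, and the remaining seven components of type~3 follow from the $D_8$ action on geometric realizations. All of that matches the paper.

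However, there is a genuine gap: you never actually produce the chain of interchange moves, and that chain \emph{is} the proof. You write that ``the hard part is finding and verifying the exact routing'' and that you ``would record the entire chain in a figure,'' but you stop there. This is not a routine verification that can be left to the reader --- the whole point of the paper's ``self-contained proofs'' in Section~\ref{sectiondegree9} is to exhibit the explicit sequence of legal rewrites, since no single interchange application permutes variables and the transposition of $d$ and $f$ emerges only from the global composition. Without the explicit sequence, your argument reduces to an appeal to the computer calculation plus a promise, which is precisely what a self-contained proof is meant to replace. To complete the proof you must supply the analogue of Figure~\ref{proof3994}: a concrete list of factor quadruples and intermediate monomials, each a legal vertex of $F(9)$, whose composite carries $( a \circv b ) \circh ( c \circv ( ( d \circv e ) \circh ( f \circv g ) \circh ( h \circv i ) ) )$ to $( a \circv b ) \circh ( c \circv ( ( f \circv e ) \circh ( d \circv g ) \circh ( h \circv i ) ) )$.
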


\begin{proof}
Similar to the proofs of Theorems \ref{theorem3981} and \ref{theorem3989}; see Figure \ref{proof3994}.
\end{proof}

  \begin{figure}[ht]
  \begin{alignat*}{2}
  &
  \text{factors $w, x, y, z$}
  &\qquad\qquad
  &
  \text{result of interchange}
  \\
  &
  &\qquad\qquad
  &
  ( a \circv b ) \circh ( c \circv ( ( d \circv e ) \circh ( f \circv g ) \circh ( h \circv i ) ) )
  \\
  &
  f, \; g, \; h, \; i
  &\qquad\qquad
  &
  ( a \circv b ) \circh ( c \circv ( ( d \circv e ) \circh ( ( f \circh h ) \circv ( g \circh i ) ) ) )
  \\
  &
  d, \; e, \; f \circh h, \; g \circh i
  &\qquad\qquad
  &
  ( a \circv b ) \circh ( c \circv ( d \circh f \circh h ) \circv ( e \circh g \circh i ) )
  \\
  &
  a, \; b, \; c, \; ( d \circh f \circh h ) \circv ( e \circh g \circh i )
  &\qquad\qquad
  &
  ( a \circh c ) \circv ( b \circh ( ( d \circh f \circh h ) \circv ( e \circh g \circh i ) ) )
  \\
  &
  d \circh f, \; h, \; e \circh g, \; i
  &\qquad\qquad
  &
  ( a \circh c ) \circv ( b \circh ( ( d \circh f ) \circv ( e \circh g ) ) \circh ( h \circv i ) )
  \\
  &
  a, \; c, \; b \circh ( ( d \circh f ) \circv ( e \circh g ), \; h \circv i
  &\qquad\qquad
  &
  ( a \circv ( b \circh ( ( d \circh f ) \circv ( e \circh g ) ) ) ) \circh ( c \circv h \circv i )
  \\
  &
  a, \; b \circh ( ( d \circh f ) \circv ( e \circh g ) ), \; c \circv h, \; i
  &\qquad\qquad
  &
  ( a \circh ( c \circv h ) ) \circv ( b \circh ( ( d \circh f ) \circv ( e \circh g ) ) \circh i )
  \\
  &
  d, \; f, \; e, \; g
  &\qquad\qquad
  &
  ( a \circh ( c \circv h ) ) \circv ( b \circh ( d \circv e ) \circh ( f \circv g ) \circh i )
  \\
  &
  a, \; c \circv h, \; b \circh ( d \circv e ), \; ( f \circv g ) \circh i
  &\qquad\qquad
  &
  ( a \circv ( b \circh ( d \circv e ) ) ) \circh ( c \circv h \circv ( ( f \circv g ) \circh i ) )
  \\
  &
  a, \; b \circh ( d \circv e ), \; c, \; h \circv ( ( f \circv g ) \circh i )
  &\qquad\qquad
  &
  ( a \circh c ) \circv ( b \circh ( d \circv e ) \circh ( h \circv ( ( f \circv g ) \circh i ) ) )
  \\
  &
  a, \; c, \; b, \; ( d \circv e ) \circh ( h \circv ( ( f \circv g ) \circh i ) )
  &\qquad\qquad
  &
  ( a \circv b ) \circh ( c \circv ( ( d \circv e ) \circh ( h \circv ( ( f \circv g ) \circh i ) ) ) )
  \\
  &
  d, \; e, \; h, \; ( f \circv g ) \circh i
  &\qquad\qquad
  &
  ( a \circv b ) \circh ( c \circv ( d \circh h ) \circv ( e \circh ( f \circv g ) \circh i ) )
  \\
  &
  a, \; b, \; c \circv ( d \circh h ), \; ( e \circh ( f \circv g ) \circh i )
  &\qquad\qquad
  &
  ( a \circh ( c \circv ( d \circh h ) ) ) \circv ( b \circh e \circh ( f \circv g ) \circh i )
  \\
  &
  a, \; c \circv ( d \circh h ), \; b \circh e \circh ( f \circv g ), \; i
  &\qquad\qquad
  &
  ( a \circv ( b \circh e \circh ( f \circv g ) ) ) \circh ( c \circv ( d \circh h ) \circv i )
  \\
  &
  a, \; b \circh e \circh ( f \circv g ), \; c, \; ( d \circh h ) \circv i
  &\qquad\qquad
  &
  ( a \circh c ) \circv ( b \circh e \circh ( f \circv g ) \circh ( ( d \circh h ) \circv i ) )
  \\
  &
  f, \; g, \; d \circh h, \; i
  &\qquad\qquad
  &
  ( a \circh c ) \circv ( b \circh e \circh ( ( f \circh d \circh h ) \circv ( g \circh i ) ) )
  \\
  &
  a, \; c, \; b \circh e, \; ( f \circh d \circh h ) \circv ( g \circh i )
  &\qquad\qquad
  &
  ( a \circv ( b \circh e ) ) \circh ( c \circv ( f \circh d \circh h ) \circv ( g \circh i ) )
  \\
  &
  a, \; b \circh e, \; c \circv ( f \circh d \circh h ), \; g \circh i
  &\qquad\qquad
  &
  ( a \circh ( c \circv ( f \circh d \circh h ) ) ) \circv ( b \circh e \circh g \circh i )
  \\
  &
  a, \; c \circv ( f \circh d \circh h ), \; b, \; e \circh g \circh i
  &\qquad\qquad
  &
  ( a \circv b ) \circh ( c \circv ( f \circh d \circh h ) \circv ( e \circh g \circh i ) )
  \\
  &
  f, \; d \circh h, \; e, \; g \circh i
  &\qquad\qquad
  &
  ( a \circv b ) \circh ( c \circv ( ( f \circv e ) \circh ( ( d \circh h ) \circv ( g \circh i ) ) ) )
  \\
  &
  d, \; h, \; g, \; i
  &\qquad\qquad
  &
  ( a \circv b ) \circh ( c \circv ( ( f \circv e ) \circh ( d \circv g ) \circh ( h \circv i ) ) )
  \end{alignat*}
  \vspace{-7mm}
  \caption{Proof of Theorem \ref{theorem3994} for component 3994}
  \label{proof3994}
  \end{figure}

%%%%%%%%%%%%%%%%%%%%%%%%%%%%%%%%%%%%%%%%%%%%%%%%%%%%%%%%%%%%%%%%%%%%%%%%

\section{Conclusion} \label{sectionconclusion}

\subsection*{Group structure on commutativity properties}

For each connected component in each degree, the commutativity properties form a group
whose multiplication is induced by the concatenation of cycles.
For the components of $G(9)$ discussed in the previous section, the fundamental group is the symmetric group 
on two letters.  
For Kock's relation \eqref{kockidentity} in degree 16, the fundamental group is the symmetric group $S_4$, 
which permutes the four inner boxes in the geometric realization; this follows from the dihedral symmetry of 
the geometric realization and the fact that $S_4$ is generated by the transpositions (1,2), (2,3), (3,4).
The geometry of the commutativity properties suggests that braiding occurs: 
there is a difference between transposing $a, b$ twice and doing nothing, since in the former case $a$ makes 
a complete circuit around $b$.  
For the components of $G(9)$, this implies that the fundamental group $\pi(G,t)$ is not simply the symmetric 
group on two letters (cyclic of order 2) but rather the braid group on two letters (infinite cyclic).

\subsection*{Higher commutativity properties}

Our colleague Michael Kinyon used Prover9, the automated theorem prover \cite{McCune}, 
to find a simple derivation of Kock's relation from the relation of our Theorem \ref{theorem3989}:
\[
( a \circv b ) \circh ( c \circv d \circv e \circv f ) \circh ( g \circv h \circv i )
\equiv
( a \circv b ) \circh ( c \circv e \circv d \circv f ) \circh ( g \circv h \circv i ).
\]
If we make the substitutions $b \leftarrow b \circv j \circv k$ and $i \leftarrow i \circv \ell$, and 
right-multiply both sides by $m \circv n \circv p \circv q$ using the operation $\circh$,
then we obtain a relation which coincides with \eqref{kockidentity} after permuting the 
indeterminates and transposing $\circh$ and $\circv$:
\begin{align*}
&
( a \circv b \circv j \circv k ) \circh 
( c \circv d \circv e \circv f ) \circh 
( g \circv h \circv i \circv \ell ) \circh
( m \circv n \circv p \circv q )
\equiv
\\
&
( a \circv b \circv j \circv k ) \circh 
( c \circv e \circv d \circv f ) \circh 
( g \circv h \circv i \circv \ell ) \circh
( m \circv n \circv p \circv q ).
\end{align*}

An important problem is to determine whether there exist commutativity properties in degree $n \ge 10$ 
which are not consequences of the known identities for $n = 9$.
More generally, we would like to find a complete set of independent generators for all commutativity 
properties of double semigroups in all degrees.

There are analogous problems for $d$-tuple semigroups with $d \ge 3$; 
by definition, these have $d$ associative (binary) operations with each pair of operations satisfying 
the interchange identity \eqref{interchange}.
The combinatorial problem of determining the number of association types in degree $n$ for $d$-tuple 
semigroups leads to a $d$-dimensional generalization of the Schr\"oder numbers which is related 
to the enumeration of guillotine partitions in $d$ dimensions.
See Ackerman et al.~\cite{ABPR2006} and Asinowski et al.~\cite{ABMP2014}, as well as sequence A103209 
in the OEIS.

\subsection*{Consequences for algebraic operads}

We conclude with an application of our results to interchange algebras.

\begin{definition} (Loday and Vallette \cite[\S 13.10.4]{LodayVallette2012})
An \textbf{interchange algebra} is a vector space $A$
over a field $\mathbb{F}$ with two bilinear products $\circh, \circv \colon A \times A \to A$
which are associative and satisfy the interchange relation \eqref{interchange}.
The corresponding symmetric operad is the \textbf{interchange operad}; we denote it by $\mathsf{INT}$.
\end{definition}

The operad $\mathsf{INT}$ is binary (the operations are bilinear), cubic (the identities involve at most 
three operations),
and symmetric (the interchange relation involves a nontrivial permutation of the variables).
Since $\mathsf{INT}$ is not quadratic, there does not exist a Koszul dual.
In arity $n$, the $S_n$-module $\mathsf{INT}(n)$ consists of all $n$-ary multilinear operations generated by 
$\circh, \circv$ 
subject to associativity and the interchange relation; this $S_n$-module is isomorphic to the 
multilinear subspace of 
degree $n$ in the free interchange algebra on $n$ generators.

Even though $\mathsf{INT}$ is a symmetric operad, it is a \textsf{Set} operad in the sense that 
all of its defining relations have the form $x \equiv y$ for two monomials $x, y$: the relations
can be expressed without using the vector space structure.
Therefore the space of $n$-ary operations in $\mathsf{INT}$ is the vector space spanned by 
the set of $n$-ary operations for the corresponding operad in \textsf{Set}, namely the free 
double semigroup on one generator.

The two-associative operad $\mathsf{AA}$ is the non-symmetric operad with two associative binary operations 
and no further relations; it satisfies $\dim \mathsf{AA}(n) = T(n)$ since by Lemma \ref{lemmaschroder} 
we know that $| VG(n) | = T(n)$.
In the corresponding symmetric operad $\mathsf{SAA}$, the $S_n$-module in arity $n$ is the direct sum of 
$T(n)$ copies of the left regular representation $\mathbb{F} S_n$, and so $\dim \mathsf{SAA}(n) = T(n) n!$.
The interchange operad $\mathsf{INT}$ is the quotient of $\mathsf{SAA}$ by the operad ideal generated
by the interchange relation.

If two association types $t_1, t_2 \in VG(n)$ belong to the same connected component of $G(n)$, then $t_1$ 
can be transformed into $t_2$ by a sequence of consequences of the interchange relation, 
and hence the two copies of $\mathbb{F} S_n$ corresponding to $t_1, t_2$ will become identified
in the quotient operad $\mathsf{INT}$.
Thus the $S_n$-module $\mathsf{INT}(n)$ will be no larger than the direct sum of $U(n)$ copies of 
$\mathbb{F} S_n$, where $U(n)$ is the number of connected components of $G(n)$ including isolated vertices.

If the monodromy group is trivial for some connected component of $G(n)$, then that component corresponds
to a direct summand isomorphic to $\mathbb{F} S_n$ in the quotient module $\mathsf{INT}(n)$.
The reason is that since there are no syzygies (relations among the relations) obtained from the consequences 
of the interchange relation, this component corresponds to a free summand in the $S_n$-module $\mathsf{INT}(n)$.

However, for any association type $t$ which is a vertex in a component of $G(n)$ admitting a nontrivial 
cycle corresponding to a commutativity relation $m \equiv m'$ in double semigroups, 
there will be a relation in the operad ideal generated by the interchange relation which implies that
the corresponding copy of $\mathbb{F} S_n$ will reduce to its quotient modulo the left $S_n$-module
generated by the commutativity property expressed in the linear form $m - m' \equiv 0$.

Thus our results imply that $\mathsf{INT}(n)$ is not a free $S_n$-module for $n \ge 9$.

%%%%%%%%%%%%%%%%%%%%%%%%%%%%%%%%%%%%%%%%%%%%%%%%%%%%%%%%%%%%%%%%%%%%%%%%

\section*{Acknowledgements}

We thank the referee for a very detalied report which led to significant improvements in many parts 
of the paper.
In particular, the referee pointed out that the natural setting for our results is the covering map 
$p\colon F(n) \to G(n)$, and emphasized the connection between the nontrivial monodromy of this map 
and the structure of the groupoid generated by $G(n)$.
The referee also brought to our attention the fact that for each connected component in each degree, 
the commutativity properties have a natural group structure induced by the concatenation of cycles.

Murray Bremner was supported by a Discovery Grant from NSERC, the Natural Sciences and Engineering 
Research Council of Canada.
Sara Madariaga was supported by a Postdoctoral Fellowship from PIMS, the Pacific Institute for the 
Mathematical Sciences.

%%%%%%%%%%%%%%%%%%%%%%%%%%%%%%%%%%%%%%%%%%%%%%%%%%%%%%%%%%%%%%%%%%%%%%%%

\end{document}